\documentclass[11pt,reqno]{amsart}
\usepackage{amsmath,amssymb,latexsym, url,graphicx,newtxtext,newtxmath,tikz}

\usepackage[margin=1.15in]{geometry}
\usepackage{booktabs}
\usepackage{xcolor}
\usepackage{enumitem}
\usepackage[colorlinks=true,linkcolor=blue!50!black,citecolor=blue!50!black,urlcolor=blue!50!black]{hyperref}

\newtheorem{theorem}{Theorem}[section]
\newtheorem{lemma}[theorem]{Lemma}

\newtheorem{corollary}[theorem]{Corollary}
\newtheorem{question}[theorem]{Question}
\theoremstyle{definition}
\newtheorem{definition}[theorem]{Definition}
\theoremstyle{remark}
\newtheorem{remark}[theorem]{Remark}

\newcommand{\R}{\mathbb{R}}
\newcommand{\Kl}{\operatorname{Kl}}
\newcommand{\dist}{\operatorname{dist}}
\newcommand{\sgn}{\operatorname{sgn}}
\newcommand{\supp}{\operatorname{supp}}
\newcommand{\T}{\mathsf T}
\newcommand{\PM}{\{-1,+1\}}
\newcommand{\cA}{\mathcal A}
\newcommand{\cC}{\mathcal C}

\begin{document}

\title[Mutually touching cylinders]{Mutually touching cylinders: Littlewood's seven, and at most ten in general}

\author[K. Kitamura]{Kenta Kitamura}
\address{Intellectual Moonshine}
\email{kitamura.kenta.1988@gmail.com}

\author[I. Mati\'c]{Ivan Mati\'c}
\address{Department of Mathematics, Baruch College, City University of New York, One Bernard Baruch Way, New York, NY 10010, USA}
\email{ivan.matic@baruch.cuny.edu}

\author[R. Radoi\v{c}i\'c]{Rado\v{s} Radoi\v{c}i\'c}
\address{Department of Mathematics, Baruch College, City University of New York, One Bernard Baruch Way, New York, NY 10010, USA}
\email{rados.radoicic@baruch.cuny.edu}

\author[A. Spomer]{Andreas Spomer}
\address{Universit\"at zu K\"oln}
\email{andreas.spomer1996@gmail.com}

\begin{abstract}
We give a computer-assisted proof that at most seven congruent infinite circular cylinders in $\mathbb{R}^3$ can touch pairwise. This answers a question of Littlewood from 1968. For cylinders of arbitrary radii we prove the upper bound ten. Whether ten can be attained remains open.
\end{abstract}

\maketitle
 
\section{Introduction}\label{sec:intro}

Denote by $C(\ell,r)$ the infinite circular cylinder in $\R^3$ with axis $\ell$ and radius $r>0$. Two cylinders $C(\ell,r)$ and $C(\ell',r')$ \emph{touch} if their interiors are disjoint and their boundaries meet. A \emph{touching family} is a finite set of cylinders that touch pairwise.

\begin{theorem}\label{thm:seven}
The maximum number of congruent cylinders that touch pairwise is seven.
\end{theorem}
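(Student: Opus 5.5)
The statement has two halves: a construction of seven pairwise touching congruent cylinders, and a proof that eight is impossible. For the lower bound I will invoke the known configuration of Bozóki, Lee and Rónyai. They give seven unit cylinders whose pairwise touching is certified by exact algebraic computation plus a Newton--Kantorovich/Smale $\alpha$-theory certificate. The setup is standard. Normalize $r=1$ and parametrize each axis $\ell_i$ by a unit direction $u_i$ and a point $p_i$. Two unit cylinders with non-parallel axes touch exactly when $\dist(\ell_i,\ell_j)=2$, i.e.
\[
\bigl((p_i-p_j)\cdot(u_i\times u_j)\bigr)^2=4\,|u_i\times u_j|^2 .
\]
Parallel axes at distance $2$ also touch, but at most three pairwise touching cylinders can be mutually parallel. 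After quotienting by isometries (dimension $6$), seven lines have $4\cdot 7-6=22$ parameters against $\binom72=21$ equations. So solutions are expected, and the certified one gives the lower bound.

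For the upper bound I suppose eight unit cylinders touch pairwise. Then $28$ equations in $4\cdot 8-6=26$ unknowns form an overdetermined system, so generically there are no solutions. Dimension counting alone proves nothing, so I need a rigorous exclusion. The plan has three steps.

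\textbf{Step 1: reduce to a compact parameter domain.} The degenerate configurations should be dismissed by hand. These are families with parallel axes; with several parallel pairs the sets of admissible lines become small. Otherwise I normalize so that $\ell_1$ is the $z$-axis and $\ell_2$ is the line $x=2$ in a suitable plane at angle $\theta$. Each remaining axis must lie at distance exactly $2$ from both $\ell_1$ and $\ell_2$. This is a two-parameter family for each remaining line, giving explicit coordinates (angle around $\ell_1$, height, direction). I bound all parameters: heights are controlled because a line at distance $2$ from many mutually skew lines cannot escape to infinity without becoming nearly parallel. The nearly-parallel limits then need a separate asymptotic argument.

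\textbf{Step 2: prune with a necessary sub-configuration test.} Any eight-family contains many seven- and six-subfamilies. I first enumerate, with interval arithmetic branch-and-bound, the possible sets of \emph{five} or \emph{six} mutually touching cylinders in the normalized coordinates. Boxes are discarded whenever some touching equation is certifiably violated. The survivors of the $6$-line stage are extended by a seventh and an eighth line, again with interval pruning. The equations $\dist(\ell_i,\ell_j)=2$ give sign information that interval arithmetic handles well. Where touching forces \emph{interior disjointness}, I also impose the correct sign of $(p_i-p_j)\cdot(u_i\times u_j)$ relative to orientation. This extra constraint is what separates genuine touching from merely tangent lines, and it rules out many spurious branches.

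\textbf{Step 3: handle the remaining boxes near actual solutions.} The main obstacle is here. Seven-configurations exist, so boxes around them never shrink to emptiness under naive interval pruning. I would treat these boxes with a Krawczyk/Newton test: certify that in each such box the system for seven lines has a unique, isolated (and nondegenerate) solution. Then I check that no eighth line can touch all seven at that solution, which is a finite computation with $21+7$ equations on a certified neighborhood. Degenerate or non-isolated seven-solutions would break this. If they appear I would separate them by adding derived equations (resultants eliminating the eighth line's parameters) and apply interval methods to those.

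The near-parallel limits from Step 1 are the other delicate point, and I would try to exclude them first by a direct geometric lemma. The lemma would say that two nearly parallel touching cylinders severely restrict the directions available to common touching partners.
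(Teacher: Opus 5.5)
Your lower bound is the same as the paper's: it also cites Boz\'oki--Lee--R\'onyai. The upper bound, however, is not proved. You hand it to a branch-and-bound computation that is never carried out, and the steps that would make that computation finite and rigorous are either missing or wrong.

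\emph{Non-compactness.} Interval methods need a compact domain, and the space of touching families modulo isometries is not compact. Even the sets you propose to enumerate in Step~2 are unbounded and positive-dimensional: $14$ unknowns against $10$ equations for five lines, and $18$ against $15$ for six. You defer the near-parallel regime to a lemma that is false as stated. Take skew axes $\ell_1,\ell_2$ and any direction $u$ outside the plane of their directions. Projecting along $u$ turns $\ell_1,\ell_2$ into two crossing lines in a plane. The lines of direction $u$ at distance $2$ from both correspond to the four points at distance $2$ from both crossing lines. So common partners exist in essentially every direction, however small the angle between $\ell_1$ and $\ell_2$. What degenerates is their position, which for most $u$ runs off to infinity.

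In the extreme case where all directions approach a common one, write the axes as $\{(a_i+tb_i,t):t\in\R\}$ with $a_i,b_i\in\R^2$. Touching then reads $\bigl((a_i-a_j)\times(b_i-b_j)\bigr)^2=4\bigl(|b_i-b_j|^2+(b_i\times b_j)^2\bigr)$, with the planar cross product. Scaling all $b_i$ by $\lambda\to0$ and dividing by $\lambda^2$ leaves $\bigl((a_i-a_j)\times(b_i-b_j)\bigr)^2=4|b_i-b_j|^2$. This is a different polynomial system, which would need its own non-existence proof.

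\emph{Step~3.} This step contradicts your own count. With $22$ unknowns and $21$ equations, a nondegenerate seven-configuration lies on a smooth curve of seven-configurations, by the implicit function theorem. So it is never isolated, and a Krawczyk or Newton uniqueness test, built for square systems, certifies nothing. You would have to exclude an eighth line along entire curves of seven-configurations, and these curves may themselves leave every compact set.

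\emph{The sign condition in Step~2.} This is a misconception. For skew axes, $\dist(\ell_i,\ell_j)=2$ already means the cylinders touch, since a common interior point would put the axes closer than $2$. So your squared equation has no spurious branch to prune. Both signs of $(p_i-p_j)\cdot(u_i\times u_j)$ occur, and the sign flips when one axis is reversed, so there is no correct sign to impose. This sign is exactly the paper's $\sigma_{ij}=\sgn\Kl(\ell_i,\ell_j)$ (Lemma~\ref{lem:kl}). It is unknown data to branch over, not a constraint.

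The missing idea is something that controls every configuration uniformly, including its degenerate and unbounded parts. The paper gets this by discarding the continuous data altogether. It keeps only the signs $\sigma_{ij}$ and $\chi(a,b,c)=\sgn[u_a\,u_b\,u_c]$, and proves obstructions on them that hold for every touching family.
\begin{enumerate}
\item The Pl\"ucker-type identity of Lemma~\ref{lem:gp} gives \ref{P1}.
\item The identity $c^{\T}Nd=0$ for $c,d\in\ker L$, with $N_{ij}=\sigma_{ij}(r_i+r_j)\|u_i\times u_j\|$, is combined with a strict triangle inequality for the vectors $c_i\,u_j\times u_i$ (Lemma~\ref{lem:master}, Corollary~\ref{cor:principle}). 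This gives \ref{P2}, \ref{P3}, and, with further algebra, the five-cylinder obstruction of Lemma~\ref{lem:five}.
\end{enumerate}
Excluding eight congruent cylinders then becomes a finite search over sign patterns (Theorem~\ref{thm:finite}), in which compactness and near-solutions never arise. Without an ingredient of this kind, your Steps~1--3 do not reduce the problem to a finite computation. That is separate from the question of whether branch-and-bound in $26$ dimensions could ever finish.
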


\begin{theorem}\label{thm:ten}
At most ten cylinders, of arbitrary radii, can touch pairwise.
\end{theorem}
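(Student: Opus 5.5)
We plan to prove Theorem~\ref{thm:ten} by reducing to a sign-pattern/combinatorial obstruction on the axis directions, and then using an algebraic or computer-assisted rigidity count. First we parametrize. A cylinder is a line $\ell_i$ with unit direction $u_i$ (defined up to sign) together with a radius $r_i$. For non-parallel axes, $C(\ell_i,r_i)$ and $C(\ell_j,r_j)$ touch exactly when $\dist(\ell_i,\ell_j)=r_i+r_j$. Parallel axes can occur only in small subfamilies: at most three mutually tangent parallel cylinders can be pairwise tangent in a plane section, and a fourth cylinder touching all of them is very constrained. We dispose of these cases separately. So assume the axes are pairwise skew. The distance condition becomes
\[
\bigl|\det(p_j-p_i,\,u_i,\,u_j)\bigr| = (r_i+r_j)\,|u_i\times u_j|,
\]
where $p_i\in\ell_i$. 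We then fix a sign $\varepsilon_{ij}\in\PM$ recording which side of $\ell_i$ the line $\ell_j$ passes, relative to the orientation of $u_i\times u_j$. The key idea is that touching forces strong consistency among these signs. For three lines, the cylinders touch only if the common perpendiculars are arranged compatibly, and for four lines we expect a forbidden configuration of signs. The outcome is a ``sign tournament'' $\sigma$ on $\{1,\dots,n\}$ in which certain small induced patterns cannot appear.

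Second, we count dimensions. The configuration space of $n$ lines with radii, modulo rigid motions and scaling, has dimension $4n+n-7=5n-7$. There are $\binom n2$ equations, and $\binom n2>5n-7$ once $n\ge 9$. Dimension counting alone does not give a bound, since the equations need not be independent. It does, however, suggest what to prove: for $n=11$ the system, restricted to every admissible sign pattern, is infeasible. We would prove infeasibility in two steps. First, enumerate the admissible sign patterns up to symmetry, using the forbidden small patterns to prune. Second, for each surviving pattern, certify infeasibility of the semialgebraic system with the signs fixed. Because any subfamily of a touching family is again touching, it suffices to rule out $11$ cylinders, and it is natural to extend touching $10$-families, or better, to find a smaller forbidden sub-configuration.

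For the certificates we will use interval arithmetic and branch-and-bound over a compactified parameter space. Normalize so that $r_1=1$ is the largest radius, and translate and rotate so that $\ell_1$ is the $z$-axis and $\ell_2$ is in a standard position. The directions $u_i$ then lie on a hemisphere. The ratios $r_i/r_1$ lie in $(0,1]$, and the axes meet a bounded region, because every $\ell_i$ lies at distance at most $2$ from $\ell_1$. We subdivide boxes and discard any box on which some touching equation is bounded away from zero. The surviving boxes lie near degenerate limits, such as $r_i\to 0$ or nearly parallel axes, and must be handled analytically. A radius tending to $0$ means the cylinder degenerates to a line tangent to the others; this reduces the problem to lines touching a touching family, which we bound separately.

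The main obstacle is this degenerate and noncompact part, namely tiny radii and nearly parallel axes. Branch-and-bound cannot close these regions directly, and they are exactly where the dimension count is closest to being tight. There we first try a blow-up rescaling that makes the limiting configuration a touching family of a simpler type with one fewer free parameter. We then apply the same certification to that limit, for example at most a bounded number of lines tangent to a fixed set of cylinders under the sign constraints.
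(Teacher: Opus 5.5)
There is a genuine gap. The step that carries the whole proof is stated only as an expectation: ``for four lines we expect a forbidden configuration of signs.'' You never name a forbidden pattern or give a mechanism that could produce one, and your sign data are the pair signs alone.

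The missing idea is a linear identity that ties the pair signs to the linear dependencies among the axis directions. Write $w_i$ for the moment of $\ell_i$ and $\Kl(\ell_i,\ell_j)=u_i\cdot w_j+u_j\cdot w_i$. For any two dependencies $\sum_i c_iu_i=0$ and $\sum_j d_ju_j=0$ one has $\sum_{i\ne j}c_id_j\Kl(\ell_i,\ell_j)=0$. Touching gives $\Kl(\ell_i,\ell_j)=\sigma_{ij}(r_i+r_j)\|u_i\times u_j\|$. Splitting $r_i+r_j$ turns the identity into a sum of row sums weighted by the positive radii $r_j$.

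The vectors $c_i\,u_j\times u_i$ sum to zero and are pairwise non-parallel. A strict triangle inequality for them shows that the $j$-th row sum has a forced sign whenever all but one of the signs $\sigma_{ij}\sgn(c_id_j)$ agree. Positive weights cannot then sum to zero, so this is exactly how the radii drop out.

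Take $c=d$ to be the unique dependency on a four-set $Q$. This gives the radius-free obstruction (P2). It is not expressible in $\sigma$ alone, because it involves the signs of the dependency's coefficients, which by Cramer's rule are the orientations $\sgn\det(u_a,u_b,u_c)$ of triples. So you need triple signs, together with the Grassmann--Pl\"ucker constraint (P1), in addition to $\sigma$. With that in place the problem becomes purely finite: an exhaustive search shows that no pattern on $11$ points satisfies (P1) and (P2). No semialgebraic certification is needed at all.

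Your fallback does not close the gap either. For $n=11$ the parameter space has dimension $5n-7=48$, far beyond what interval subdivision can cover. By your own account, the regions of tiny radius ratios and nearly parallel axes cannot be cleared by boxes and are left to an unspecified ``blow-up rescaling.'' So, as written, nothing is proved for $n=11$. Incidentally, $\binom n2>5n-7$ only from $n=10$ on: for $n=9$ it reads $36<38$.

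The parallel case also needs repair. Four pairwise tangent circles do exist (Descartes configurations), so ``at most three'' is false, and ``very constrained'' is not an argument. The paper bounds such families by six using two facts:
\begin{itemize}
\item $2$, $3$ or $4$ pairwise tangent circles have at most two, one or zero common tangent lines with all circles on one side;
\item two non-parallel members that produce the same tangent line would be at distance $|r-r'|<r+r'$.
\end{itemize}
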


The following question remains open.
 
\begin{question}\label{question:ten}
Is there a configuration of ten infinite circular cylinders that touch pairwise?
\end{question}

Littlewood \cite{Lit68} asked for the maximum number of congruent cylinders that touch pairwise (see also \cite[Section~2.4, Problem~14]{BMP}). Bezdek \cite{Bez05} proved that at most $24$ congruent cylinders can touch pairwise. Boz\'oki, Lee, and R\'onyai \cite{BLR15} found seven. Koizumi \cite{Koi25} proved the upper bound $18$. Dillon, Koizumi, and Luo \cite{DKL25} reduced it to $10$. H\"ofer \cite{Hof26} reduced it to $9$. 
For arbitrary radii, Pikhitsa and Pikhitsa \cite{PP19} argued, by methods they describe as heuristic, that at most ten cylinders with any smooth cross-section can touch pairwise.
Solymosi and Zahl \cite{SZ25} proved that touching families are finite, with the bound $192\,080$ in $\R^3$. 

Our proofs of Theorems~\ref{thm:seven} and~\ref{thm:ten} follow similar paths. We attach a sign to each pair of cylinders and a sign to each triple of cylinders. Then we show that certain sign configurations on four cylinders are impossible, whatever the radii are. Two further configurations, on four and on five cylinders, are impossible when all radii are equal.

Once these obstructions are established, the geometry drops out. What remains is a finite problem: an element of $\{-1,1\}$ is assigned to every pair and every triple of $\{0,\dots,n-1\}$ so that no forbidden configuration occurs. We prove that for $n=11$ every assignment contains a configuration that is forbidden for all radii, and that for $n=8$ every assignment contains one of the four forbidden configurations. This gives both upper bounds.

On eleven points there are $2^{55}$ assignments of signs to pairs and $2^{165}$ assignments of signs to triples. Checking the $2^{220}$ combinations one by one, at a billion per second, would take more than $10^{49}$ years. For a fixed assignment to pairs, the assignments to triples form the leaves of a binary tree. Each forbidden configuration involves at most ten of the $165$ signs, so it is detected long before a leaf is reached, and the subtree below it is discarded. Symmetries, and the passage from ten points to eleven, reduce the number of trees to be searched to a few thousand. The whole computation takes about a minute \cite{Verifier}.

The sign of a pair of cylinders goes back to Pikhitsa and Choi \cite{PC13}, who called it chirality. 
Pikhitsa and Pikhitsa \cite{PP17} observed that the matrix of these signs is a Seidel matrix. 
They proposed that certain small sign patterns cannot occur. In \cite{PP19} they used three such patterns to argue that eleven cylinders with any smooth cross-section cannot touch pairwise. For circular cylinders, Theorem~\ref{thm:ten} proves this.
Koizumi \cite{Koi25} identified it with the sign of the Klein product of the axes. The bounds of \cite{Koi25} and \cite{DKL25} are proved with these signs. H\"ofer's bound \cite{Hof26} is proved by a quadratic form argument. For our proof the signs of pairs alone are not enough. The sign of a triple of cylinders is the sign of the mixed product of the directions of the axes. Lemma~\ref{lem:master} ties the two kinds of signs together.

\section{Lines and cylinders}\label{sec:lines}
 
For $\vv x,\vv y,\vv z\in\R^3$ we write $\vv x\cdot\vv y$ for the dot product and $\vv x\times\vv y$ for the cross product. We write $[\vv x\,\vv y\,\vv z]=\vv x\cdot(\vv y\times\vv z)$ for the determinant of the matrix with columns $\vv x,\vv y,\vv z$. The cross product vanishes exactly for parallel vectors and has length $\|\vv x\|\,\|\vv y\|\sin\theta$. The determinant vanishes exactly when $\vv x,\vv y,\vv z$ lie in a plane through the origin. We use $(\vv x\times\vv y)\times\vv z=\vv y\,(\vv x\cdot\vv z)-\vv x\,(\vv y\cdot\vv z)$.

An \emph{oriented line} $\ell=\{\vv p+t\vv u:t\in\R\}$ with $\|\vv u\|=1$ has the \emph{direction} $\vv u$ and the \emph{moment} $\vv w=\vv p\times\vv u$. The moment does not depend on the choice of $\vv p\in\ell$. The pair $(\vv u,\vv w)$ satisfies $\|\vv u\|=1$ and $\vv u\cdot\vv w=0$. Conversely, every such pair comes from exactly one oriented line, the line through $\vv u\times\vv w$ with direction $\vv u$. We write $\ell=(\vv u,\vv w)$. Reversing the orientation replaces $(\vv u,\vv w)$ by $(-\vv u,-\vv w)$. For oriented lines $\ell=(\vv u,\vv w)$ and $\ell'=(\vv u',\vv w')$ put
$$\Kl(\ell,\ell')=\vv u\cdot\vv w'+\vv u'\cdot\vv w .$$
It is symmetric, and $\Kl(\ell,\ell)=2\,\vv u\cdot\vv w=0$.
 
\begin{lemma}\label{lem:kl}
Let $\ell=(\vv u,\vv p\times\vv u)$ and $\ell'=(\vv u',\vv p'\times\vv u')$. Then $\Kl(\ell,\ell')=(\vv p-\vv p')\cdot(\vv u\times\vv u')$. If $\vv u,\vv u'$ are not parallel, then $\dist(\ell,\ell')=|\Kl(\ell,\ell')|/\|\vv u\times\vv u'\|$.
\end{lemma}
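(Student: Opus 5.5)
The first identity is a direct expansion of the definition using cyclic invariance of the triple product. With $\vv w=\vv p\times\vv u$ and $\vv w'=\vv p'\times\vv u'$ we have
$$\vv u\cdot\vv w'=\vv u\cdot(\vv p'\times\vv u')=[\vv u\,\vv p'\,\vv u']=\vv p'\cdot(\vv u'\times\vv u)=-\vv p'\cdot(\vv u\times\vv u'),$$
and similarly
$$\vv u'\cdot\vv w=\vv u'\cdot(\vv p\times\vv u)=\vv p\cdot(\vv u\times\vv u').$$
Adding the two gives $\Kl(\ell,\ell')=(\vv p-\vv p')\cdot(\vv u\times\vv u')$. Since the moment does not depend on the chosen point, neither does this expression, so any points $\vv p\in\ell$ and $\vv p'\in\ell'$ may be used.

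For the distance formula, assume $\vv u,\vv u'$ are not parallel, so that $\vv n=\vv u\times\vv u'\neq 0$. The plan is to show that $\dist(\ell,\ell')$ equals the absolute value of the component of $\vv p-\vv p'$ along $\vv n$. The function
$$f(s,t)=\|(\vv p+s\vv u)-(\vv p'+t\vv u')\|^2$$
is a quadratic in $(s,t)$ whose quadratic part is positive definite, because $\vv u,\vv u'$ are independent. Hence $f$ attains its minimum, at some pair of points $\vv q\in\ell$ and $\vv q'\in\ell'$. At the minimum both partial derivatives vanish, so $\vv q-\vv q'$ is orthogonal to both $\vv u$ and $\vv u'$, and therefore $\vv q-\vv q'=c\,\vv n$ for some scalar $c$.

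Now apply the first identity with the base points $\vv q,\vv q'$:
$$\Kl(\ell,\ell')=(\vv q-\vv q')\cdot\vv n=c\|\vv n\|^2 .$$
The distance is $\dist(\ell,\ell')=\|\vv q-\vv q'\|=|c|\,\|\vv n\|$. Combining the two displays gives
$$\dist(\ell,\ell')=\frac{|\Kl(\ell,\ell')|}{\|\vv n\|}=\frac{|\Kl(\ell,\ell')|}{\|\vv u\times\vv u'\|}.$$

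Alternatively, one can avoid calculus. For arbitrary points $\vv p+s\vv u$ and $\vv p'+t\vv u'$, the component of their difference along $\vv n/\|\vv n\|$ is the constant $(\vv p-\vv p')\cdot\vv n/\|\vv n\|$, since $\vv n$ is orthogonal to $\vv u$ and $\vv u'$. This gives the lower bound. Equality is achieved by choosing $s,t$ to cancel the components along $\vv u,\vv u'$, which span the plane orthogonal to $\vv n$.

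The only step needing care is the existence of the common perpendicular, that is, that the minimum of $f$ is attained and $\vv q-\vv q'\parallel\vv n$. This follows from the independence of $\vv u$ and $\vv u'$ as above.
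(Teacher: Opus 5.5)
Your proof is correct, and you derive the first identity exactly as the paper does, using cyclic invariance of the triple product. The paper only cites the distance formula as the standard one for skew lines, while you prove it by finding the common perpendicular (by minimization or by the orthogonal-component argument) and applying the first identity at its endpoints; this is a valid, self-contained justification.
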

 
\begin{proof}
The first identity follows from $\vv u\cdot(\vv p'\times\vv u')=-\vv p'\cdot(\vv u\times\vv u')$ and $\vv u'\cdot(\vv p\times\vv u)=\vv p\cdot(\vv u\times\vv u')$. The distance formula is the standard one for two skew lines.
\end{proof}
 
Hence two cylinders $C(\ell,r)$ and $C(\ell',r')$ with non-parallel axes touch if and only if
\begin{equation}\label{eq:touch}
\Kl(\ell,\ell')=\pm(r+r')\,\|\vv u\times\vv u'\| .
\end{equation}

\begin{lemma}\label{lem:coplanar}
Let $C(\ell_1,r_1),C(\ell_2,r_2),C(\ell_3,r_3)$ be pairwise touching cylinders with pairwise non-parallel axes. Then $\vv u_1,\vv u_2,\vv u_3$ are linearly independent.
\end{lemma}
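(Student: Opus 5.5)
We argue by contradiction: assume $\vv u_1,\vv u_2,\vv u_3$ lie in a common plane through the origin, and let $\vv n$ be a unit normal to that plane. The plan is to show that the three axes then lie in parallel planes perpendicular to $\vv n$. This turns all pairwise distances into differences of heights along $\vv n$, and the touching conditions become incompatible on the real line.

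\textbf{Step 1: distances become height differences.} Write $\ell_i=(\vv u_i,\vv p_i\times\vv u_i)$ and set $h_i=\vv p_i\cdot\vv n$. Since $\vv u_i\cdot\vv n=0$, the number $h_i$ does not depend on the choice of $\vv p_i\in\ell_i$. For $i\ne j$ the vectors $\vv u_i,\vv u_j$ are non-parallel and both orthogonal to $\vv n$. Hence $\vv u_i\times\vv u_j$ is a nonzero multiple of $\vv n$, namely $\vv u_i\times\vv u_j=\varepsilon_{ij}\|\vv u_i\times\vv u_j\|\,\vv n$ with $\varepsilon_{ij}\in\PM$. Lemma~\ref{lem:kl} then gives
\[
\Kl(\ell_i,\ell_j)=(\vv p_i-\vv p_j)\cdot(\vv u_i\times\vv u_j)=\varepsilon_{ij}(h_i-h_j)\|\vv u_i\times\vv u_j\|,
\]
and therefore $\dist(\ell_i,\ell_j)=|h_i-h_j|$.

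\textbf{Step 2: touching conditions.} By \eqref{eq:touch}, touching of $C(\ell_i,r_i)$ and $C(\ell_j,r_j)$ with non-parallel axes means $\dist(\ell_i,\ell_j)=r_i+r_j$. So we get
\[
|h_i-h_j|=r_i+r_j \qquad\text{for all } i\ne j .
\]
In particular the three heights are pairwise distinct, since all radii are positive.

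\textbf{Step 3: contradiction on the line.} Relabel so that $h_a<h_b<h_c$. Then $h_c-h_a=(h_c-h_b)+(h_b-h_a)$. Substituting the touching conditions gives $r_a+r_c=(r_b+r_c)+(r_a+r_b)$, that is, $r_b=0$. This contradicts $r_b>0$, so $\vv u_1,\vv u_2,\vv u_3$ are linearly independent.

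There is no real obstacle here. The only point needing care is Step 1: one must check that $h_i$ is well defined and that the sign of $\vv u_i\times\vv u_j$ relative to $\vv n$ disappears once absolute values are taken.
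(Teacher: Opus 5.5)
Your proposal is correct and follows essentially the same route as the paper: take a unit normal to the plane of the directions, use Lemma~\ref{lem:kl} to turn each distance into a height difference $|h_i-h_j|$, and then order the heights so that the touching conditions force the middle radius to vanish. Your explicit check that $h_i$ does not depend on the chosen point of $\ell_i$ is a small detail that the paper leaves implicit.
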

 
\begin{proof}
Suppose $\vv u_1,\vv u_2,\vv u_3\perp\vv m$ with $\|\vv m\|=1$. Then $\vv u_i\times\vv u_j=\pm\|\vv u_i\times\vv u_j\|\,\vv m$. By Lemma~\ref{lem:kl}, $\dist(\ell_i,\ell_j)=|(\vv p_i-\vv p_j)\cdot\vv m|=|h_i-h_j|$ with $h_i=\vv p_i\cdot\vv m$. So $|h_i-h_j|=r_i+r_j$ for the three pairs. Order the lines so that $h_1<h_2<h_3$. Then $r_1+r_3=h_3-h_1=(h_3-h_2)+(h_2-h_1)=r_2+r_3+r_1+r_2$. So $r_2=0$, a contradiction.
\end{proof}
 
We need to bound the size of a touching family that contains two cylinders with parallel axes. The bound rests on the following fact about circles in the plane.
 
\begin{lemma}\label{lem:circles}
Let $k_1,\dots,k_m$ be pairwise tangent circles in the plane with disjoint interiors, $m\ge2$. Then $m\le4$. The number of lines tangent to all of them, with all circles on one side, is two if $m=2$, at most one if $m=3$, and zero if $m=4$.
\end{lemma}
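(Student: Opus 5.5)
We prove the bound $m\le4$ and the tangent-line counts together. A common tangent line with all circles on one side bounds a closed half-plane containing every circle. The natural tool is inversion, which turns such a line into one more circle tangent to all the others.

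First the case $m=2$. Two externally tangent circles of radii $r_1,r_2$ with centers at distance $r_1+r_2$ have exactly two common tangent lines with both circles on the same side. These are the two external tangents, and they are symmetric about the line of centres. The third line tangent to both, the internal tangent at the point of contact, separates the circles and so does not count.

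For $m\ge3$ we fix two of the circles, $k_1$ and $k_2$, tangent at a point $P$, and invert about $P$.
\begin{itemize}
\item The images $k_1',k_2'$ are two parallel lines, and the open strip between them is the image of the region exterior to both circles near $P$.
\item Every other $k_j$ is tangent to $k_1$ and $k_2$ at points different from $P$ and has interior disjoint from theirs. Its image is therefore a circle lying in the strip and tangent to both lines. So every such image has diameter equal to the width $w$ of the strip.
\item The images are pairwise tangent with disjoint interiors. Their centres lie on the midline of the strip and are spaced exactly $w$ apart.
\item Three such circles would need three points on a line at pairwise distance $w$, which is impossible. Hence at most two circles besides $k_1,k_2$ remain, giving $m\le4$.
\end{itemize}

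Now let $L$ be a tangent line with all circles on one side, and suppose $L$ avoids $P$. Then $L$ does not pass through $P$, since otherwise it would be the common tangent at $P$, which separates $k_1$ and $k_2$. Its image $L'$ is a circle through $P$. This circle is tangent to both lines and to every image circle, and its interior is disjoint from theirs. The reason is that the open half-plane beyond $L$ contains no circle points, and it does not contain $P$, since $P$ lies on $k_1$, which sits on the closed side. So that half-plane maps to the interior of $L'$.

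The circle $L'$ therefore sits in the strip like the other image circles, with diameter $w$, and moreover passes through $P$. Since $P$ is the centre of inversion, it maps to infinity, so this needs care. We handle it by choosing the geometry so that $L'$ is simply another member of the chain of equal circles in the strip, now required to be tangent to all the others.

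\begin{itemize}
\item For $m=3$, the chain consists of $k_3'$ and $L'$, two adjacent circles. Since $k_3'$ has exactly two neighbours, the centres at distance $w$ on either side, at most two candidates exist. We must rule out one of them. Under inversion the two neighbour positions correspond to the two circles tangent to $k_1,k_2,k_3$ in the outer region. One of them is the bounded "inner Soddy" circle in the curvilinear triangle, and it does not pass through $P$. So at most one neighbour position is a line through $P$, giving at most one tangent line.
\item For $m=4$, the chain $k_3',k_4'$ already uses two adjacent positions, and a third circle tangent to both is impossible. Hence there is no tangent line.
\end{itemize}

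The main obstacle is the bookkeeping of "all circles on one side" under inversion: why $L$ avoids $P$, and why the image has disjoint interior. A cleaner alternative would be Descartes' theorem with curvature $0$ for the line. There the quadratic for the curvature of a fourth circle tangent to $k_1,k_2,k_3$ yields at most one root equal to $0$, since the two roots differ by a positive quantity. This handles $m=3$ directly, and $m=4$ follows because a fifth mutually tangent circle of curvature $0$ would contradict $m\le4$.
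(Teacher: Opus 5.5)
Your argument is correct, and it takes a genuinely different route from the paper. The paper gives no argument of its own here; it only cites Descartes and Coxeter's account. You give a self-contained proof. Inverting about the contact point $P$ of $k_1,k_2$ turns these two circles into the boundary lines of a strip of width $w$. Every other $k_j$ becomes a circle of diameter $w$ centred on the midline, and a one-sided common tangent $L$ becomes one more such circle that also passes through $P$. Then $m\le4$ and the counts for $m=3,4$ reduce to two facts: at most two points of a line can be pairwise at distance $w$, and the position of $P$ decides the rest. This proves exactly the statement the paper uses, including the one-sidedness condition, which is not literally part of Descartes' theorem. The citation only buys brevity.

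Some steps should be tightened.
\begin{enumerate}
\item ``Suppose $L$ avoids $P$. Then $L$ does not pass through $P$'' should be stated as a claim and proved as you do. A line through $P$ tangent to $k_1$ is tangent at $P$, so it is the common tangent there, and that line separates $k_1$ and $k_2$.
\item Drop the sentence saying that $P\in L'$ ``needs care'' and the phrase about ``choosing the geometry.'' $L'$ is an ordinary circle through $P$, and $P\in L'$ is precisely the constraint you use for $m=3$.
\item Justify $P\notin k_j$ for $j\ge3$. Otherwise $k_j$ meets $k_1$ only at $P$, so it is tangent to $k_1$ at $P$. It then lies on one side of their common tangent line, so it is nested with $k_1$ or with $k_2$.
\item For $m=3$ the exclusion is seen most directly in the strip. $P$ lies in the open strip but not in the closed disc bounded by $k_3'$, and that disc cuts the strip in two. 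So only the neighbour position on the side of $P$ can pass through $P$.
\item Your Descartes alternative is incomplete as written. A root $0$ fixes a curvature, not a line, so you still need that each root belongs to only one tangent circle. Also, applying $m\le4$ to a configuration containing a line needs the generalized version, which again comes from inversion.
\end{enumerate}
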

 
\begin{proof}
This is a result in elementary geometry. It goes back to Descartes \cite{Des43}, and a modern account is in Coxeter \cite[pp.~13--16]{Cox69}.
\end{proof}
 
Three pairwise tangent circles can have a common tangent line. The circles of radii $1,\tfrac14,1$ with centers $(0,1)$, $(1,\tfrac14)$, $(2,1)$ are pairwise tangent, and all three touch the $x$-axis.
 
\begin{lemma}\label{lem:parallel}
A touching family containing two cylinders with parallel axes has at most six members.
\end{lemma}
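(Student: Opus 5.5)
The plan is to project along the common direction of the parallel axes and reduce to Lemma~\ref{lem:circles}, then bound the remaining cylinders using Lemma~\ref{lem:coplanar}. First a normalization. Group the family into parallel classes, i.e.\ maximal sets of members with parallel axes. Choose $\vv d$ to be the direction of a largest class, and let $m\ge2$ be its size. The cross-section by a plane $\Pi\perp\vv d$ turns this class into $m$ pairwise tangent circles with disjoint interiors. Hence $m\le4$ by Lemma~\ref{lem:circles}, and in particular every parallel class has at most $m$ members.

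Next, take a member $C=C(\ell,r)$ whose axis is not parallel to $\vv d$. Its orthogonal projection onto $\Pi$ along $\vv d$ is a closed strip $S$ of positive width bounded by two parallel lines. A cylinder $D$ with axis parallel to $\vv d$ is invariant under translation by $\vv d$. Therefore $C$ and $D$ have disjoint interiors iff the interior of $S$ misses the open disk of $D$, and their boundaries meet iff $S$ meets the closed disk. So touching $D$ means that the circle of $D$ lies outside $S$ and is tangent to one of its boundary lines. Two of the $m$ circles on opposite sides of $S$ would be separated by a strip of positive width and could not be tangent. Hence all $m$ circles lie on one side of $S$ and are tangent to a common line $L$, with all circles on one side. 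Equivalently, $C$ is tangent, from the side opposite the circles, to the plane $P$ spanned by $L$ and $\vv d$. By Lemma~\ref{lem:circles}, if $m=4$ there is no such $C$, and the family has exactly four members. If $m=3$ there is at most one possible line $L$. If $m=2$ there are at most two lines $L_1,L_2$.

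Then I bound the members attached to a fixed plane $P$, that is, the members $C$ not parallel to $\vv d$ that are tangent to $P$ as above. Each such $C$ has its axis parallel to $P$, so its direction lies in the $2$-dimensional direction space of $P$, which also contains $\vv d$. Suppose two of these members, $C$ and $C'$, had non-parallel axes. Then $C$, $C'$ and one of the $\vv d$-cylinders would be three pairwise touching cylinders with pairwise non-parallel but linearly dependent directions, contradicting Lemma~\ref{lem:coplanar}. So all members attached to $P$ lie in a single parallel class, and hence there are at most $m$ of them. For $m=3$ this gives at most $3+3=6$ members. For $m=2$ each of the two planes contributes at most $2$ members, giving at most $2+2+2=6$. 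Every member not parallel to $\vv d$ is attached to at least one plane, so nothing is missed; double counting only helps.

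The step I expect to need the most care is the projection step. One must check that touching in $\R^3$ is exactly the stated planar condition on the strip and the circle, including the claim that the projection of the cylinder is a strip of positive width. One must also justify that the circles all lie on one side of a single boundary line. The maximal-class normalization is what avoids a case analysis on how many cylinders attach to each plane. Without it, a plane could carry three mutually parallel cylinders tangent to it, as in the example following Lemma~\ref{lem:circles}, and the count for $m=2$ would only give $8$.
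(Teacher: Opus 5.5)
Your proof is correct and follows the paper's argument: take a largest parallel class, pass to its cross-section circles, count the admissible common tangent lines with Lemma~\ref{lem:circles}, and tally $2+2+2$, $3+3$, $4$. The differences are only in packaging. You express tangency through the projected strip rather than the enlarged circles $k_i^r$. To show that members sharing a tangent line are mutually parallel, you apply Lemma~\ref{lem:coplanar} with a third cylinder from the class, whereas the paper compares the distance $|r-r'|$ between the parallel planes containing the two axes with $r+r'$.
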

 
\begin{figure}[t]\centering
\begin{tikzpicture}[scale=1.15]
\coordinate (c1) at (0,0); \coordinate (c2) at (2.5,0);
\draw[dashed] (c1) circle (1.6);
\draw[dashed] (c2) circle (2.1);
\draw[thick] (c1) circle (1);
\draw[thick] (c2) circle (1.5);
\fill (c1) circle (1pt) node[below left] {$\bar p_1$};
\fill (c2) circle (1pt) node[below right] {$\bar p_2$};
\node at (-0.55,-0.55) {$k_1$};
\node at (3.3,-0.9) {$k_2$};
\node at (-1.55,-1.05) {$k_1^r$};
\node at (4.25,-1.0) {$k_2^r$};
\draw[thick] (-2.65,0.48) -- (3.72,1.78) node[right] {$t(C)=t_+$};
\draw[thick,dashed] (-2.77,1.068) -- (3.60,2.368) node[right] {$\bar\ell$};
\draw[thick] (-2.65,-0.48) -- (3.72,-1.78) node[right] {$t_-$};
\draw (-1.8,0.653) -- (-1.92,1.241) node[midway,left] {$r$};
\draw (c1) -- (-0.2,0.98) node[midway,right] {$r_1$};
\draw (c2) -- (2.2,1.47) node[midway,right] {$r_2$};
\end{tikzpicture}
\caption{The plane $\Pi$ in the proof of Lemma~\ref{lem:parallel}. The circles $k_1,k_2$ are the cross-sections of two parallel members $C_1,C_2$. The projected axis $\bar\ell$ of a member $C$ of radius $r$ is tangent to the enlarged circles $k_1^r,k_2^r$ (dashed). The line $t(C)$ is parallel to $\bar\ell$ at distance $r$, and is one of the two common tangents $t_\pm$ of $k_1,k_2$.}\label{fig:plane}
\end{figure}
 
\begin{proof}
Call a maximal set of mutually parallel members of the family a \emph{class}. Let $Q$ be a class with the most members, $q=|Q|$, and $\vv e$ its common direction. Then $q\ge2$. Let $\Pi$ be a plane perpendicular to $\vv e$, and write $\bar x$ for the projection of $\vv x$ to $\Pi$ (Figure~\ref{fig:plane}). A member $C_i\in Q$ meets $\Pi$ in a disc with center $\bar p_i$ and radius $r_i$. Let $k_i$ be its boundary circle. The circles $k_i$, $C_i\in Q$, are pairwise tangent with disjoint interiors. So $q\le4$ by Lemma~\ref{lem:circles}.
 
Let $C=C(\ell,r)$ be a member outside $Q$. The distance of a point $\vv x$ from $\ell_i$ is $\|\bar x-\bar p_i\|$. So $r+r_i=\dist(\ell,\ell_i)=\dist(\bar\ell,\bar p_i)$ for every $C_i\in Q$. That is, $\bar\ell$ is tangent to every circle $k_i^r$ with center $\bar p_i$ and radius $r_i+r$. Any two of these circles intersect, because the circles $k_i$ are tangent. So all of them lie on one side of $\bar\ell$. Let $t(C)$ be the line parallel to $\bar\ell$ at distance $r$ on that side. It is tangent to every $k_i$, $C_i\in Q$, with all of them on one side. By Lemma~\ref{lem:circles} there are at most two such lines if $q=2$, at most one if $q=3$, and none if $q=4$.
 
Let $C'=C(\ell',r')$ be another member outside $Q$ with $t(C')=t(C)=t$. The axes $\ell$ and $\ell'$ lie in the planes parallel to $t+\R\vv e$ at distances $r$ and $r'$ on the side away from the circles. Two non-parallel lines in parallel planes are at the distance of the planes. So $\ell\not\parallel\ell'$ would give $\dist(\ell,\ell')=|r-r'|<r+r'$. Hence the members outside $Q$ with the same line $t$ are mutually parallel, and there are at most $q$ of them. The family has at most $2+2+2$ members if $q=2$, at most $3+3$ if $q=3$, and $4$ if $q=4$.
\end{proof}

\begin{remark}
For congruent cylinders, H\"ofer \cite[Lemma~1(a)]{Hof26} proved that a touching family of at least five members contains no parallel pair. Lemma~\ref{lem:parallel} has a weaker conclusion, but it applies to arbitrary radii, and it is all that is used below.
\end{remark}
 
From now on, $C_1,\dots,C_n$ is a touching family with $n\ge7$, with axes $\ell_i=(\vv u_i,\vv w_i)$ and radii $r_i$. By Lemma~\ref{lem:parallel} the axes are pairwise non-parallel. By Lemma~\ref{lem:coplanar} any three directions are linearly independent. For $c\in\R^n$ let $\supp(c)=\{i:c_i\ne0\}$. In particular every nonzero $c\in K$ has $|\supp(c)|\ge4$, and $\Kl(\ell_i,\ell_j)\ne0$ for $i\ne j$ by \eqref{eq:touch}.

Let $L$ be the $3\times n$ matrix whose columns are the directions $\vv u_1,\dots,\vv u_n$, and $M$ the $3\times n$ matrix whose columns are the moments $\vv w_1,\dots,\vv w_n$. Let $K=\ker L\subseteq\R^n$, that is,
\begin{equation}\label{eq:K}
c\in K\quad\text{exactly when}\quad\sum_ic_i\vv u_i=\vv 0 .
\end{equation}
Let $N=L^{\T}M+M^{\T}L$ be the symmetric $n\times n$ matrix with entries $N_{ij}=\Kl(\ell_i,\ell_j)$. Its diagonal is zero.

\section{Signs and identities}\label{sec:signs}
 
Choose orientations of the axes. For distinct indices put
$$\sigma_{ij}=\sgn\Kl(\ell_i,\ell_j)\in\PM,\qquad \chi(a,b,c)=\sgn[\vv u_a\,\vv u_b\,\vv u_c]\in\PM .$$
The sign $\sigma_{ij}$ is symmetric. The sign $\chi$ changes under the exchange of two arguments. By \eqref{eq:touch},
\begin{equation}\label{eq:N}
N_{ij}=\Kl(\ell_i,\ell_j)=\sigma_{ij}\,(r_i+r_j)\,\|\vv u_i\times\vv u_j\| .
\end{equation}
Reversing the orientation of $\ell_m$ replaces $(\vv u_m,\vv w_m)$ by $(-\vv u_m,-\vv w_m)$. This changes the sign of $\sigma_{mj}$ for all $j\ne m$ and of $\chi(a,b,c)$ for all triples containing $m$, and nothing else.

\begin{lemma}\label{lem:cramer}
Let $\vv u_a,\vv u_b,\vv u_c,\vv u_d\in\R^3$ be vectors, no three in a plane through the origin. Put
$$k_a=[\vv u_b\,\vv u_c\,\vv u_d],\quad k_b=-[\vv u_a\,\vv u_c\,\vv u_d],\quad k_c=[\vv u_a\,\vv u_b\,\vv u_d],\quad k_d=-[\vv u_a\,\vv u_b\,\vv u_c] .$$
Then all four numbers are nonzero and $k_a\vv u_a+k_b\vv u_b+k_c\vv u_c+k_d\vv u_d=\vv 0$. Every linear dependence among the four vectors is a multiple of $(k_a,k_b,k_c,k_d)$.
\end{lemma}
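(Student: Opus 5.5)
Nonvanishing comes directly from the hypothesis. Each $k$ is, up to sign, the determinant of three of the four vectors. No three of them lie in a plane through the origin, so each such determinant is nonzero.

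For the relation $k_a\vv u_a+k_b\vv u_b+k_c\vv u_c+k_d\vv u_d=\vv 0$, I would use the standard cofactor argument. Form the $4\times4$ matrix whose first three rows are the coordinate rows of the matrix $[\vv u_a\,\vv u_b\,\vv u_c\,\vv u_d]$. For its fourth row, repeat the $i$-th coordinate row, with $i\in\{1,2,3\}$. This matrix has two equal rows, so its determinant is zero. Expand it along the fourth row. The cofactor attached to column $j$ is $(-1)^{4+j}$ times the $3\times3$ minor obtained by deleting that column. Those signs are $-,+,-,+$ for $j=a,b,c,d$, which gives $(-1)^{4+j}$ times the minor equal to $-k_j$. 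The expansion therefore reads $-\sum_j k_j(\vv u_j)_i=0$. This holds for each coordinate $i$, which gives the vector identity.

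As a quick sanity check of the signs: if $\vv u_d=\alpha\vv u_a+\beta\vv u_b+\gamma\vv u_c$, Cramer's rule gives $\alpha=[\vv u_d\,\vv u_b\,\vv u_c]/[\vv u_a\,\vv u_b\,\vv u_c]=k_a/(-k_d)$. This is consistent with $k_a\vv u_a+\dots+k_d\vv u_d=\vv 0$.

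Uniqueness is a dimension count. The vectors $\vv u_a,\vv u_b,\vv u_c$ are linearly independent, so the $3\times4$ matrix has rank $3$. Its kernel is therefore one-dimensional. It contains the nonzero vector $(k_a,k_b,k_c,k_d)$, so every dependence is a multiple of it.

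None of these steps is hard. The only point needing care is the sign bookkeeping in the cofactor expansion.
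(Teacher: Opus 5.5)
Your proof is correct and is essentially the paper's argument. The paper expands, along its first column, the singular $4\times4$ matrix with rows $(x\cdot u_i,\,u_i^{\T})$ for arbitrary $x\in\R^3$; for $x$ a coordinate vector this is your matrix up to transposition and moving the repeated row to the front, uniqueness comes from the same rank count, and your sign bookkeeping (cofactor signs $-,+,-,+$ against minors $k_a,-k_b,k_c,-k_d$) checks out.
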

 
\begin{proof}
Let $\vv x\in\R^3$. The $4\times4$ matrix with rows $(\vv x\cdot\vv u_i,\vv u_i^{\T})$, $i=a,b,c,d$, is singular, since its first column is a combination of the others. Expanding its determinant along the first column gives $\vv x\cdot(k_a\vv u_a+k_b\vv u_b+k_c\vv u_c+k_d\vv u_d)=0$ for all $\vv x$. The dependencies form a one-dimensional space because $\vv u_b,\vv u_c,\vv u_d$ is a basis.
\end{proof}
 
\begin{lemma}\label{lem:gp}
Let $\vv a,\vv v_1,\vv v_2,\vv v_3,\vv v_4\in\R^3$ and $\Delta_{ij}=[\vv a\,\vv v_i\,\vv v_j]$. Then $\Delta_{12}\Delta_{34}-\Delta_{13}\Delta_{24}+\Delta_{14}\Delta_{23}=0$.
\end{lemma}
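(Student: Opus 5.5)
The identity is a Grassmann–Plücker relation, and I would prove it by rewriting each $\Delta_{ij}$ as a $2\times 2$ determinant. Since $\Delta_{ij}=[\vv a\,\vv v_i\,\vv v_j]=\vv a\cdot(\vv v_i\times\vv v_j)$ is linear in $\vv a$, I can treat $\vv a$ as fixed.

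First dispose of the trivial case. If $\vv a=\vv 0$, every $\Delta_{ij}$ vanishes and there is nothing to prove. Otherwise, by scaling $\vv a$ (both sides are homogeneous of degree two in $\vv a$) and choosing coordinates, I may assume $\vv a=\vv e_3$.

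Write $\bar v_i\in\R^2$ for the first two coordinates of $\vv v_i$. Expanding the determinant with first column $\vv e_3$ gives
$$\Delta_{ij}=\det(\bar v_i,\bar v_j).$$
The claim becomes the classical three-term Plücker relation
$$\det(\bar v_1,\bar v_2)\det(\bar v_3,\bar v_4)-\det(\bar v_1,\bar v_3)\det(\bar v_2,\bar v_4)+\det(\bar v_1,\bar v_4)\det(\bar v_2,\bar v_3)=0$$
for four vectors in the plane. A coordinate-free way to say the reduction is that $\Delta_{ij}=\omega(\vv v_i,\vv v_j)$ for the alternating form $\omega(\vv x,\vv y)=\vv a\cdot(\vv x\times\vv y)$, which factors through $\R^3/\R\vv a\cong\R^2$.

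For the planar relation, fix $\bar v_2,\bar v_3,\bar v_4$ and regard the left side as a function of $\bar v_1$. It is linear, and I would check that it vanishes on a spanning set:
\begin{itemize}
\item At $\bar v_1=\bar v_2$ it becomes $\det(\bar v_2,\bar v_2)\det(\bar v_3,\bar v_4)-\det(\bar v_2,\bar v_3)\det(\bar v_2,\bar v_4)+\det(\bar v_2,\bar v_4)\det(\bar v_2,\bar v_3)=0$.
\item At $\bar v_1=\bar v_3$, antisymmetry of $\det$ similarly makes the terms cancel.
\end{itemize}
If $\bar v_2,\bar v_3$ are independent, they span $\R^2$ and the relation follows. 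If they are dependent, then $\det(\bar v_2,\bar v_3)=0$, so the identity reduces to $\det(\bar v_1,\bar v_2)\det(\bar v_3,\bar v_4)=\det(\bar v_1,\bar v_3)\det(\bar v_2,\bar v_4)$. This is immediate after writing the dependent pair as scalar multiples of a common vector. Alternatively, the degenerate case follows by continuity, since both sides are polynomials.

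There is no real obstacle here. The only care needed is the sign bookkeeping in the two spanning-set evaluations and the degenerate case, both of which are routine. As a fallback, direct expansion in coordinates after setting $\vv a=\vv e_3$ is a short polynomial check.
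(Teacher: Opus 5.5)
Your proof is correct and is essentially the paper's argument: the paper also completes $\mathbf a$ to a basis $\mathbf e_1,\mathbf e_2,\mathbf a$, so that every $\Delta_{ij}$ becomes the same constant times the planar minor $f_ig_j-f_jg_i$, and then verifies the three-term planar Pl\"ucker relation. The only difference is in that last check: the paper expands directly, while you argue by linearity in $\bar v_1$ with the dependent case handled separately, which is equally valid.
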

 
\begin{proof}
If $\vv a=\vv 0$ all terms vanish. Otherwise complete $\vv a$ to a basis $\vv e_1,\vv e_2,\vv a$ and write $\vv v_i=f_i\vv e_1+g_i\vv e_2+h_i\vv a$. Then $\Delta_{ij}=m_{ij}[\vv a\,\vv e_1\,\vv e_2]$ with $m_{ij}=f_ig_j-f_jg_i$, and the identity reduces to $m_{12}m_{34}-m_{13}m_{24}+m_{14}m_{23}=0$. This is checked by expansion.
\end{proof}

For a four-element set $Q=\{a<b<c<d\}$ of indices, let $k^Q$ be the vector $(k_a,k_b,k_c,k_d)$ of Lemma~\ref{lem:cramer} for $\vv u_a,\vv u_b,\vv u_c,\vv u_d$, extended by zeros to a vector in $K$. Its entries are nonzero, and their signs are read off from $\chi$:
\begin{equation}\label{eq:kappa}
\sgn k^Q_a=\chi(b,c,d),\quad\sgn k^Q_b=-\chi(a,c,d),\quad\sgn k^Q_c=\chi(a,b,d),\quad\sgn k^Q_d=-\chi(a,b,c) .
\end{equation}
 
For $c,d\in\R^n$ and an index $j$ put
$$\rho_j(c,d)=\sum_{i\ne j}c_id_j\,\sigma_{ij}\,\|\vv u_i\times\vv u_j\| ,$$
the $j$-th \emph{row sum}. It is zero when $d_j=0$. The $j$-th column sum of the same array is $\rho_j(d,c)$.
 
\begin{lemma}\label{lem:master}
Let $c,d\in K$.
\begin{enumerate}[label=(\alph*)]
\item $\displaystyle\sum_{i\ne j}c_id_jN_{ij}=0$.
\item $\displaystyle\sum_{j}r_j\big(\rho_j(c,d)+\rho_j(d,c)\big)=0$. If all radii are equal, then $\sum_j\rho_j(c,d)=0$.
\item Let $d_j\ne0$ and $g\in\PM$. Put $\varepsilon_i=\sigma_{ij}\sgn(c_id_j)$ for $i\in\supp(c)\setminus\{j\}$. If all but at most one of the $\varepsilon_i$ equal $g$, then $\rho_j(c,d)$ has the sign $g$.
\end{enumerate}
\end{lemma}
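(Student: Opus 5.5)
For (a), I will expand the bilinear form $c^{\T}Nd$ using $N=L^{\T}M+M^{\T}L$. Since the diagonal of $N$ vanishes, $\sum_{i\ne j}c_id_jN_{ij}=c^{\T}Nd$. Then
$$c^{\T}Nd=(Lc)^{\T}(Md)+(Mc)^{\T}(Ld).$$
Because $c,d\in K$ we have $Lc=Ld=\vv 0$, so both terms vanish. This step is immediate.

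For (b), I will substitute \eqref{eq:N} into (a). Writing $s_{ij}=\sigma_{ij}\|\vv u_i\times\vv u_j\|$, which is symmetric in $i,j$, (a) becomes
$$\sum_{i\ne j}c_id_j(r_i+r_j)s_{ij}=0 .$$
I will split this into the part carrying $r_j$ and the part carrying $r_i$. The $r_j$ part is
$$\sum_j r_j\sum_{i\ne j}c_id_js_{ij}=\sum_j r_j\rho_j(c,d).$$
For the $r_i$ part, I will rename $i\leftrightarrow j$ and use the symmetry of $s$. This gives $\sum_j r_j\sum_{i\ne j}d_ic_js_{ij}=\sum_jr_j\rho_j(d,c)$, which proves the first statement.

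If all radii equal $r>0$, the identity becomes $r\sum_j(\rho_j(c,d)+\rho_j(d,c))=0$. Moreover $\sum_j\rho_j(c,d)$ and $\sum_j\rho_j(d,c)$ are both the full double sum $\sum_{i\ne j}c_id_js_{ij}$: one is taken by rows and the other by columns, and $s$ is symmetric. So they are equal, and their common value $\sum_j\rho_j(c,d)$ is zero.

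For (c), the main content is a triangle-type inequality, and this is where I expect the real work. The term with index $i$ in $\rho_j(c,d)$ is
$$c_id_j\sigma_{ij}\|\vv u_i\times\vv u_j\|=\varepsilon_i\,|c_i|\,|d_j|\,\|\vv u_i\times\vv u_j\|,$$
and it is nonzero for $i\in\supp(c)\setminus\{j\}$, since the directions are pairwise non-parallel. If all $\varepsilon_i=g$, the claim is clear. Otherwise exactly one index $m$ has $\varepsilon_m=-g$, and I must show that
$$|c_m|\,\|\vv u_m\times\vv u_j\|<\sum_{i\in S\setminus\{m\}}|c_i|\,\|\vv u_i\times\vv u_j\|,\qquad S=\supp(c)\setminus\{j\}.$$
To get this, I will take the cross product of the relation $\sum_ic_i\vv u_i=\vv 0$ with $\vv u_j$. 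This gives
$$c_m\,\vv u_m\times\vv u_j=-\sum_{i\in S\setminus\{m\}}c_i\,\vv u_i\times\vv u_j,$$
and the triangle inequality then yields the bound with $\le$.

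Strict inequality requires that the vectors $\vv u_i\times\vv u_j$, $i\in S\setminus\{m\}$, are not all positively parallel up to sign. All of them lie in the plane $\vv u_j^{\perp}$. If two of them were parallel, then $\vv u_i,\vv u_{i'},\vv u_j$ would be coplanar, contradicting Lemma~\ref{lem:coplanar}. Since $|\supp(c)|\ge4$, the set $S\setminus\{m\}$ has at least two elements, so the triangle inequality is strict.
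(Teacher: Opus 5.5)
Your proof is correct and follows the paper's argument essentially step for step. Part (a) comes from $c^{\T}Nd=(Lc)\cdot(Md)+(Mc)\cdot(Ld)$, part (b) from splitting $r_i+r_j$ and using the symmetry of $\sigma_{ij}\|\mathbf{u}_i\times\mathbf{u}_j\|$, and part (c) from crossing the dependency with $\mathbf{u}_j$ and applying the strict triangle inequality to at least two pairwise non-parallel vectors. The phrase ``not all positively parallel up to sign'' is garbled: strictness needs only that the vectors $c_i\,\mathbf{u}_i\times\mathbf{u}_j$, $i\in S\setminus\{m\}$, are not all positive multiples of one vector, and your non-parallelism argument via Lemma~\ref{lem:coplanar} supplies exactly that.
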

 
\begin{proof}
(a) The terms with $i=j$ may be added, since $N_{ii}=0$. Then the sum is $c^{\T}Nd=(Lc)\cdot(Md)+(Mc)\cdot(Ld)=0$.
 
(b) Substitute \eqref{eq:N} into (a) and split $r_i+r_j$. The terms with $r_j$ give $\sum_jr_j\rho_j(c,d)$. The terms with $r_i$ give $\sum_ir_i\sum_{j\ne i}c_id_j\sigma_{ij}\|\vv u_i\times\vv u_j\|=\sum_ir_i\rho_i(d,c)$, by the symmetry of $\sigma_{ij}$ and $\|\vv u_i\times\vv u_j\|$. If all radii equal $r$, then (a) reads $2r\sum_j\rho_j(c,d)=0$.
 
(c) For $i\in\supp(c)\setminus\{j\}$ put $\vv y_i=c_i\,\vv u_j\times\vv u_i$. These vectors are nonzero. They sum to $\vv u_j\times\sum_ic_i\vv u_i-c_j\,\vv u_j\times\vv u_j$, which is $\vv 0$ by \eqref{eq:K}. No two are parallel, since otherwise $\vv u_j,\vv u_i,\vv u_{i'}$ would lie in a plane. There are at least three of them, because $|\supp(c)|\ge4$. Since $c_i\|\vv u_i\times\vv u_j\|=\sgn(c_i)\|\vv y_i\|$,
$$\rho_j(c,d)=|d_j|\sum_i\varepsilon_i\|\vv y_i\| .$$
If all $\varepsilon_i$ equal $g$, this has the sign $g$. If all but one do, say $\varepsilon_k=-g$, then $\rho_j(c,d)=g\,|d_j|\big(\sum_{i\ne k}\|\vv y_i\|-\|\vv y_k\|\big)$. This has the sign $g$, because $\vv y_k=-\sum_{i\ne k}\vv y_i$ is a sum of at least two nonzero, pairwise non-parallel vectors, so the triangle inequality is strict.
\end{proof}
 
\begin{corollary}\label{cor:principle}
Let $c,d\in K$ be nonzero and $g\in\PM$. Consider the following two conditions.
\begin{enumerate}[label=(\roman*)]
\item There is an index $j\in\supp(d)$ such that at least two indices $i\in\supp(c)\setminus\{j\}$ satisfy $\sigma_{ij}\sgn(c_id_j)=g$.
\item There is an index $i\in\supp(c)$ such that at least two indices $j\in\supp(d)\setminus\{i\}$ satisfy $\sigma_{ij}\sgn(c_id_j)=g$.
\end{enumerate}
\begin{enumerate}[label=(\alph*)]
\item If all radii are equal, then (i) holds.
\item For arbitrary radii, (i) or (ii) holds.
\end{enumerate}
\end{corollary}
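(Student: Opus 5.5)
We argue by contradiction, using Lemma~\ref{lem:master}(b) together with the contrapositive of Lemma~\ref{lem:master}(c). Suppose (i) fails. Then for every $j\in\supp(d)$, at most one index $i\in\supp(c)\setminus\{j\}$ has $\sigma_{ij}\sgn(c_id_j)=g$. So all but at most one of the $\varepsilon_i$ attached to row $j$ equal $-g$. Lemma~\ref{lem:master}(c), applied with $-g$ in place of $g$, then shows that $\rho_j(c,d)$ has sign $-g$ for every $j\in\supp(d)$. For $j\notin\supp(d)$ we have $\rho_j(c,d)=0$. Since $d\ne0$, the support of $d$ is nonempty; in fact $|\supp(d)|\ge4$.

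For part (a), with all radii equal, Lemma~\ref{lem:master}(b) gives $\sum_j\rho_j(c,d)=0$. The left side is a sum of zeros and at least one term of strict sign $-g$. That is impossible, so (i) holds.

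For part (b), suppose both (i) and (ii) fail. The failure of (i) gives, as above, that $\rho_j(c,d)$ has sign $-g$ for $j\in\supp(d)$ and vanishes otherwise. Now treat the failure of (ii) symmetrically. Exchange the roles of $c$ and $d$. The product $\sigma_{ij}\sgn(c_id_j)$ is symmetric under this exchange together with the swap of $i$ and $j$, because $\sigma$ is symmetric. Hence for each $i\in\supp(c)$, all but at most one $j\in\supp(d)\setminus\{i\}$ have $\sigma_{ji}\sgn(d_jc_i)=-g$. Lemma~\ref{lem:master}(c), applied to the pair $(d,c)$ at row index $i$ with $d_i$ replaced by $c_i\ne0$, gives that $\rho_i(d,c)$ has sign $-g$ for $i\in\supp(c)$ and vanishes otherwise.

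Therefore every summand $r_j\bigl(\rho_j(c,d)+\rho_j(d,c)\bigr)$ is either zero or of sign $-g$, since the radii are positive. At least one summand is nonzero, because $\supp(d)\neq\emptyset$. This contradicts Lemma~\ref{lem:master}(b).

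The main point to check is that Lemma~\ref{lem:master}(c) really applies with the roles swapped, that is, with $c,d\in K$ exchanged. The lemma is stated for arbitrary $c,d\in K$, so this is fine. One also needs $\supp(c)\setminus\{j\}$ to contain at least three indices, which Lemma~\ref{lem:master}(c) already guarantees through $|\supp(c)|\ge4$. With these checks the argument is otherwise a direct sign count.
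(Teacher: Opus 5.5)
Your proof is correct and is essentially the paper's argument: negating (i) (and, for part (b), also (ii)) lets you apply Lemma~\ref{lem:master}(c) with $-g$. This shows that every $\rho_j(c,d)$ with $j\in\supp(d)$ (and every $\rho_i(d,c)$ with $i\in\supp(c)$) has sign $-g$ while the rest vanish, which contradicts Lemma~\ref{lem:master}(b). Your remarks on the symmetry $\sigma_{ij}=\sigma_{ji}$ when $c$ and $d$ are exchanged, and on needing $c,d\neq0$ so that Lemma~\ref{lem:master}(c) applies, are points the paper leaves implicit.
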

 
Condition (ii) is condition (i) with $c$ and $d$ exchanged.
 
\begin{proof}
If (i) fails, then for every $j\in\supp(d)$ all but at most one of the signs $\sigma_{ij}\sgn(c_id_j)$, $i\in\supp(c)\setminus\{j\}$, equal $-g$. By Lemma~\ref{lem:master}(c) every $\rho_j(c,d)$ with $j\in\supp(d)$ has the sign $-g$, and the other $\rho_j(c,d)$ are zero. So $\sum_j\rho_j(c,d)\ne0$. For equal radii this contradicts Lemma~\ref{lem:master}(b), which proves (a). If (ii) fails as well, then every $\rho_i(d,c)$ with $i\in\supp(c)$ has the sign $-g$ too, by Lemma~\ref{lem:master}(c) with $c$ and $d$ exchanged. So $\sum_jr_j(\rho_j(c,d)+\rho_j(d,c))\ne0$, since all $r_j>0$. This contradicts Lemma~\ref{lem:master}(b) and proves (b).
\end{proof}
 
All constraints below apply Corollary~\ref{cor:principle} to the vectors $k^Q$. Part (b) gives constraints for arbitrary radii. Part (a) gives further constraints for equal radii.

\begin{lemma}\label{lem:four}
Let $Q$ be a set of four indices, $k=k^Q$, and $\varepsilon_{ij}=\sigma_{ij}\sgn(k_ik_j)$ for the six pairs $\{i,j\}\subseteq Q$. Let $P$ be the set of pairs with $\varepsilon_{ij}=+1$. Then $|P|\in\{2,3,4\}$. If $|P|=2$, the two pairs in $P$ share an element. If $|P|=4$, the two pairs not in $P$ share an element.
\end{lemma}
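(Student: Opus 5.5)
The plan is to apply Corollary~\ref{cor:principle}(b) with $c=d=k$, once for $g=+1$ and once for $g=-1$. The vector $k=k^Q$ is nonzero and lies in $K$ by Lemma~\ref{lem:cramer}, and $\supp(k)=Q$, so the corollary applies. When $c=d=k$, the quantity $\sigma_{ij}\sgn(c_id_j)$ becomes $\sigma_{ij}\sgn(k_ik_j)=\varepsilon_{ij}$, which is symmetric in $i,j$. Conditions (i) and (ii) therefore both say the same thing: there is an index $j\in Q$ such that at least two indices $i\in Q\setminus\{j\}$ satisfy $\varepsilon_{ij}=g$. So the corollary gives this one condition for each choice of $g$, whatever the radii are.

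Next I translate this into graph language. Consider the complete graph on the four vertices of $Q$. Colour an edge $\{i,j\}$ ``plus'' if it lies in $P$ and ``minus'' otherwise. Taking $g=+1$, the plus graph $P$ has a vertex of degree at least $2$. Taking $g=-1$, the minus graph, which is the complement of $P$, also has a vertex of degree at least $2$.

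The conclusions now follow from counting. A graph with a vertex of degree at least $2$ has at least two edges, so $|P|\ge2$, and the complement has at least two edges, so $|P|\le6-2=4$. If $|P|=2$, its two edges must share a vertex. Otherwise they would form a perfect matching on the four vertices, and every vertex would have degree $1$ in $P$. The same argument applied to the complement shows that if $|P|=4$, the two pairs not in $P$ share an element.

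I do not expect a real obstacle here. The one point to check is that the corollary may be applied with $c=d$. Its proof only uses Lemma~\ref{lem:master}, whose parts hold for any $c,d\in K$, so this is allowed. With $c=d$, the sum in part (b) of that lemma reads $2\sum_j r_j\rho_j(k,k)=0$, which is exactly what rules out all row sums sharing a sign.
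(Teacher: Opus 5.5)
Your proof is correct and follows the paper's argument: apply Corollary~\ref{cor:principle}(b) with $c=d=k^Q$, observe that conditions (i) and (ii) coincide, and read off the two cases $g=\pm1$ as ``some vertex has degree at least two'' in $P$ and in its complement. Your explicit check that the corollary allows $c=d$ is a harmless addition.
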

 
\begin{proof}
Apply Corollary~\ref{cor:principle}(b) with $c=d=k$. The signs $\sigma_{ij}\sgn(k_ik_j)$ are the $\varepsilon_{ij}$, and conditions (i) and (ii) coincide. So for each $g\in\PM$ some $j\in Q$ lies in at least two pairs with $\varepsilon_{ij}=g$. For $g=+1$ this excludes $|P|\le1$ and $|P|=2$ with two disjoint pairs. For $g=-1$ it excludes $|P|\ge5$ and $|P|=4$ with two disjoint pairs outside $P$.
\end{proof}

In the rest of this section all radii are equal. We scale so that they equal $\tfrac12$. Then $N_{ij}=\sigma_{ij}\|\vv u_i\times\vv u_j\|$ by \eqref{eq:N}.
 
\begin{lemma}\label{lem:two4}
Let $I$ and $J$ be sets of four indices. For each $g\in\PM$ there exists $j\in J$ such that at least two indices $i\in I\setminus\{j\}$ satisfy $\sigma_{ji}\sgn(k^J_jk^I_i)=g$.
\end{lemma}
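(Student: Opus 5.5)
This is a direct application of Corollary~\ref{cor:principle}(a), with the roles of $c$ and $d$ chosen so that condition (i) matches the statement. I take $c=k^I$ and $d=k^J$. Both are nonzero vectors in $K$: by Lemma~\ref{lem:cramer} each entry of $k^Q$ is nonzero on $Q$, and the vector $k^Q$ extended by zeros lies in $K$ by \eqref{eq:K}. Moreover $\supp(k^I)=I$ and $\supp(k^J)=J$, because the four entries on each set are nonzero and all other entries vanish.

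Since all radii are equal in this part of the section, Corollary~\ref{cor:principle}(a) applies for each fixed $g\in\PM$. It gives an index $j\in\supp(d)=J$ such that at least two indices $i\in\supp(c)\setminus\{j\}=I\setminus\{j\}$ satisfy $\sigma_{ij}\sgn(c_id_j)=g$. Substituting $c_i=k^I_i$, $d_j=k^J_j$ and using $\sigma_{ij}=\sigma_{ji}$, this sign equals $\sigma_{ji}\sgn(k^J_jk^I_i)$. That is exactly the assertion of the lemma.

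The only points needing care are routine. The sets $I$ and $J$ may overlap, or even coincide; this is allowed, because Corollary~\ref{cor:principle} makes no disjointness assumption and excludes $i=j$ automatically. The nondegeneracy hypotheses (pairwise non-parallel axes, any three directions independent) hold for the whole family, by Lemmas~\ref{lem:parallel} and~\ref{lem:coplanar}, since $n\ge7$. So there is no real obstacle: the lemma is simply the equal-radius form of the principle, specialized to the vectors $k^I$ and $k^J$.
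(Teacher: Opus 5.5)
Your proposal is correct and is exactly the paper's proof: it applies Corollary~\ref{cor:principle}(a) with $c=k^I$ and $d=k^J$, using $\supp(k^I)=I$ and $\supp(k^J)=J$. The extra checks you include (that both vectors are nonzero elements of $K$, that $\sigma$ is symmetric, and that $I$ and $J$ may overlap) are accurate details the paper leaves implicit.
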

 
\begin{proof}
This is Corollary~\ref{cor:principle}(a) with $c=k^I$ and $d=k^J$, since $\supp(k^I)=I$ and $\supp(k^J)=J$.
\end{proof}
 
\begin{lemma}\label{lem:five}
Let $s\in\PM$. There are no five members $\ell_0,\dots,\ell_4$ of the family such that
\begin{enumerate}[label=(\roman*)]
\item $\chi(0,1,2)=\chi(0,1,3)=\chi(0,1,4)=\chi(1,2,3)=\chi(1,2,4)=+1$ and $\chi(0,2,3)=\chi(0,3,4)=\chi(1,3,4)=-1$, and
\item $\sigma_{03}=-s$ and $\sigma_{ij}=s$ for the eight pairs
$$(i,j)\in\{(0,1),(0,2),(0,4),(1,2),(1,3),(1,4),(2,3),(2,4)\}.$$
\end{enumerate}
\end{lemma}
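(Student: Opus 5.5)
The plan is to turn the hypotheses into a finite sign computation that contradicts Lemma~\ref{lem:two4} or Lemma~\ref{lem:four}. Throughout I write every $\sigma_{ij}$ as $s$ times a sign, so that $s$ factors out of all the quantities $\sigma_{ji}\sgn(k^J_jk^I_i)$. Since Lemma~\ref{lem:two4} asks for both values of $g$, this factoring replaces $g$ by $sg$ and costs nothing.

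\emph{Step 1: determine the two missing triple signs.} Hypothesis (i) leaves $\chi(0,2,4)$ and $\chi(2,3,4)$ unspecified. I would fix them with Lemma~\ref{lem:gp}.
\begin{itemize}
\item Take $\vv a=\vv u_0$ and $\vv v_i=\vv u_i$ for $i=1,\dots,4$. The relation $\Delta_{12}\Delta_{34}-\Delta_{13}\Delta_{24}+\Delta_{14}\Delta_{23}=0$ has first term of sign $\chi(0,1,2)\chi(0,3,4)=-1$ and third term of sign $\chi(0,1,4)\chi(0,2,3)=-1$. Hence $-\Delta_{13}\Delta_{24}>0$, and since $\Delta_{13}=[\vv u_0\,\vv u_1\,\vv u_3]>0$ this forces $\chi(0,2,4)=+1$.
\item Take $\vv a=\vv u_3$ and the vectors $\vv u_0,\vv u_1,\vv u_2,\vv u_4$. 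Then $\Delta_{02}\Delta_{14}<0$ and $\Delta_{04}\Delta_{12}>0$, using $\chi(0,2,3)=-1$, $\chi(1,3,4)=-1$, $\chi(0,3,4)=-1$ and $\chi(1,2,3)=+1$. With $\Delta_{01}>0$ this forces $[\vv u_3\,\vv u_2\,\vv u_4]<0$, that is, $\chi(2,3,4)=+1$.
\end{itemize}
All ten triple signs are then known. By \eqref{eq:kappa}, the sign vectors of $k^Q$ are known for all five four-element subsets $Q\subseteq\{0,\dots,4\}$. For example, $k^{\{0,1,2,3\}}$ has signs $(+,+,+,-)$ and $k^{\{0,1,3,4\}}$ has signs $(-,+,+,-)$.

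\emph{Step 2: case split on $\sigma_{34}$.} The only pair sign not fixed by hypothesis (ii) is $\sigma_{34}$. I would treat $\sigma_{34}=s$ and $\sigma_{34}=-s$ separately.

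\emph{Step 3: search for a violated constraint.} In each case, run through the ordered pairs $(I,J)$ of four-subsets of $\{0,\dots,4\}$, including $I=J$. For each $j\in J$, tabulate the signs $\sigma_{ji}\sgn(k^J_jk^I_i)$ over $i\in I\setminus\{j\}$. The goal is a pair $(I,J)$ and a value of $g$ such that every row has at most one entry equal to $g$; this contradicts Lemma~\ref{lem:two4}. Failing that, the goal is a set $Q$ whose pattern $P$ is excluded by Lemma~\ref{lem:four}. Quick checks show the pairs built only from $\{0,1,2,3\}$ and $\{0,1,3,4\}$ do not suffice: every row there has at least two entries of each needed sign. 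So the contradiction must come from sets containing both $2$ and $4$, such as $\{0,1,2,4\}$, $\{1,2,3,4\}$ and $\{0,2,3,4\}$. These were unavailable before Step 1, and they involve $\sigma_{34}$ through the index pair $3,4$.

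\emph{Main obstacle.} The main obstacle is Step 3. Finding, in each of the two cases, the specific pair $(I,J)$ and sign $g$ that fail is a finite search, but it involves $25$ ordered pairs, two values of $g$, and the case split. It is also conceivable that no single application of Lemma~\ref{lem:two4} to the vectors $k^Q$ suffices. In that event I would apply Corollary~\ref{cor:principle}(a) directly to a different vector of $K$, for instance a combination $k^I\pm k^J$ of two such vectors, whose support and sign pattern are forced by Lemma~\ref{lem:cramer}.
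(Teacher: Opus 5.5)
There is a genuine gap. Step~1 contains a sign slip, and after correcting it, Step~3 provably cannot close one of your two cases.

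\textbf{The sign slip and the case that cannot be closed.} From $-\Delta_{13}\Delta_{24}>0$ and $\Delta_{13}>0$ you get $\Delta_{24}=[\mathbf u_0\,\mathbf u_2\,\mathbf u_4]<0$, so $\chi(0,2,4)=-1$. This is the paper's $Q>0$. With your value, all three terms of the Grassmann--Pl\"ucker relation at $\mathbf u_0$ would be negative, so you would be analysing a pattern that already violates (P1), and any contradiction you found would be spurious. Your $\chi(2,3,4)=+1$ is correct. The corrected sign vectors are $\kappa^{\{0,1,2,4\}}=(+,+,+,-)$, $\kappa^{\{0,2,3,4\}}=(+,+,-,+)$ and $\kappa^{\{1,2,3,4\}}=(+,+,+,-)$. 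Normalize $s=+1$.
\begin{itemize}
\item If $\sigma_{34}=-1$, then for $Q=\{0,2,3,4\}$ every pair except $\{2,3\}$ has $\sigma_{ij}\kappa^Q_i\kappa^Q_j=+1$. So $|P|=5$, and Lemma~\ref{lem:four} finishes this case.
\item If $\sigma_{34}=+1$, nothing you propose to use is violated. For $Q=\{0,1,2,3\},\{0,1,2,4\},\{0,1,3,4\},\{0,2,3,4\},\{1,2,3,4\}$ the sets $P$ are $\{01,02,03,12\}$, $\{01,02,12\}$, $\{03,04,13\}$, $\{02,03,04,24\}$ and $\{12,13,23\}$. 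All of these are allowed by Lemma~\ref{lem:four}.
\item In the same case, Lemma~\ref{lem:two4} also holds. For every $I\ne\{0,1,3,4\}$ and every row $j$, at least two $i$ satisfy $\sigma_{ji}\kappa^I_i=+1$. Every $\kappa^J$ has both signs, so for each $g$ a suitable $j$ exists. The five pairs with $I=\{0,1,3,4\}$ check directly.
\end{itemize}
So for $\sigma_{34}=s$ the pattern is $\{\mathrm{P1},\mathrm{P2},\mathrm{P3}\}$-admissible. This is exactly why the paper needs (P4) as a separate property.

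\textbf{Your fallback does not help either.} The vectors of $K$ supported on $\{0,\dots,4\}$ form a plane. Up to sign, its sign vectors are the five $\kappa^Q$ together with $(+,+,+,-,-)$, $(+,+,+,+,-)$, $(-,+,+,+,-)$, $(-,+,-,+,-)$ and $(-,-,-,+,-)$. Condition (i) of Corollary~\ref{cor:principle} holds for every pair of these and for both $g$. So no sign-level argument on these five lines can finish.

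\textbf{The missing idea is quantitative.} The paper rescales the directions so that $\mathbf v_3=\mathbf v_0+\mathbf v_1+\mathbf v_2$ and $\mathbf v_4=x\mathbf v_0+y\mathbf v_1+\mathbf v_2$ with $0<x<1<y$. The bounds $x<1<y$ come from the signs of $[134]$ and $[034]$ via Lemma~\ref{lem:cramer}. It then applies Lemma~\ref{lem:master}(a) with $c=d$ to each of the two relations. This gives exact identities $L_1=0$ and $L_2=0$ in the norms of $\mathbf a=\mathbf v_1\times\mathbf v_2$, $\mathbf b=\mathbf v_2\times\mathbf v_0$, $\mathbf c=\mathbf v_0\times\mathbf v_1$ and their combinations. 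Finally it shows
$$yL_1-L_2=y\|\mathbf b-\mathbf c\|+x\|y\mathbf c-\mathbf b\|+yT_1+T_2>0,$$
where $T_1,T_2\ge0$ by the triangle inequality. The contradiction comes from weighting two identities against each other with the actual value $y$. Corollary~\ref{cor:principle} uses one identity $c^{\T}Nd=0$ at a time and only the signs, and here that is provably not enough. Note also that $\sigma_{34}$ plays no role in the paper's argument, so your case split becomes unnecessary once the metric identities are used.
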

 
The signs $\sigma_{34}$, $\chi(0,2,4)$ and $\chi(2,3,4)$ are not restricted.
 
\begin{proof}
Suppose such lines exist. Write $[abc]$ for $[\vv u_a\,\vv u_b\,\vv u_c]$ and put
$$A=[123],\quad B=-[023],\quad C=[013],\quad D=[012],\quad P=[124],\quad Q=-[024],\quad R=[014].$$
By (i), $A,B,C,D,P,R$ are positive. Lemma~\ref{lem:gp} with $\vv a=\vv u_0$ gives $[012][034]-[013][024]+[014][023]=0$. Hence $C[024]=D[034]-RB$. Both terms on the right are negative by (i). So $Q>0$.
 
By Lemma~\ref{lem:cramer} applied to $\vv u_0,\vv u_1,\vv u_2,\vv u_3$ and to $\vv u_0,\vv u_1,\vv u_2,\vv u_4$,
\begin{equation}\label{eq:u3u4}
A\vv u_0+B\vv u_1+C\vv u_2=D\vv u_3,\qquad P\vv u_0+Q\vv u_1+R\vv u_2=D\vv u_4 .
\end{equation}
Substituting into $[134]$ and $[034]$ and expanding,
\begin{align*}
D^2[134]&=[\vv u_1,\,A\vv u_0+C\vv u_2,\,P\vv u_0+R\vv u_2]=(CP-AR)D,\\
D^2[034]&=[\vv u_0,\,B\vv u_1+C\vv u_2,\,Q\vv u_1+R\vv u_2]=(BR-CQ)D .
\end{align*}
So $CP-AR<0$ and $CQ-BR>0$ by (i). Put $x=CP/(AR)$ and $y=CQ/(BR)$. Then $0<x<1<y$.
 
Rescale the directions: $\vv v_0=A\vv u_0$, $\vv v_1=B\vv u_1$, $\vv v_2=C\vv u_2$, $\vv v_3=D\vv u_3$, and $\vv v_4=(DC/R)\,\vv u_4$. All factors are positive. Multiplying the second equation in \eqref{eq:u3u4} by $C/R$,
\begin{equation}\label{eq:v3v4}
\vv v_3=\vv v_0+\vv v_1+\vv v_2,\qquad \vv v_4=x\,\vv v_0+y\,\vv v_1+\vv v_2 .
\end{equation}
Write $\vv v_i=s_i\vv u_i$ with $s_i>0$ and put $K_{ij}=s\,s_is_jN_{ij}=s\,\sigma_{ij}\|\vv v_i\times\vv v_j\|$. By (ii), $K_{ij}=\|\vv v_i\times\vv v_j\|$ for the eight pairs in (ii) and $K_{03}=-\|\vv v_0\times\vv v_3\|$. The two relations \eqref{eq:v3v4} are two linear dependencies among $\vv u_0,\dots,\vv u_4$, that is, two vectors in $K$ by \eqref{eq:K}. Lemma~\ref{lem:master}(a) with $c=d$ equal to each of them, multiplied by $s$, gives
\begin{align}
K_{01}+K_{02}+K_{12}-K_{03}-K_{13}-K_{23}&=0,\label{eq:K3}\\
xyK_{01}+xK_{02}+yK_{12}-xK_{04}-yK_{14}-K_{24}&=0.\label{eq:K4}
\end{align}
Put $\vv a=\vv v_1\times\vv v_2$, $\vv b=\vv v_2\times\vv v_0$, and $\vv c=\vv v_0\times\vv v_1$. From \eqref{eq:v3v4},
\begin{align*}
\vv v_0\times\vv v_3&=\vv c-\vv b, & \vv v_1\times\vv v_3&=\vv a-\vv c, & \vv v_2\times\vv v_3&=\vv b-\vv a,\\
\vv v_0\times\vv v_4&=y\vv c-\vv b, & \vv v_1\times\vv v_4&=\vv a-x\vv c, & \vv v_2\times\vv v_4&=x\vv b-y\vv a .
\end{align*}
Substituting into \eqref{eq:K3} and \eqref{eq:K4},
\begin{align}
L_1&:=\|\vv a\|+\|\vv b\|+\|\vv c\|+\|\vv b-\vv c\|-\|\vv a-\vv c\|-\|\vv a-\vv b\|=0,\label{eq:f1}\\
L_2&:=y\|\vv a\|+x\|\vv b\|+xy\|\vv c\|-x\|y\vv c-\vv b\|-y\|\vv a-x\vv c\|-\|x\vv b-y\vv a\|=0.\label{eq:f2}
\end{align}
Put $T_1=\|\vv a-x\vv c\|+(1-x)\|\vv c\|-\|\vv a-\vv c\|$ and $T_2=\|y\vv a-x\vv b\|+(y-x)\|\vv b\|-y\|\vv a-\vv b\|$. Both are nonnegative by the triangle inequality, since $\vv a-\vv c=(\vv a-x\vv c)-(1-x)\vv c$ and $y(\vv a-\vv b)=(y\vv a-x\vv b)-(y-x)\vv b$ with $0<x<1<y$. A direct expansion shows
$$yL_1-L_2=y\|\vv b-\vv c\|+x\|y\vv c-\vv b\|+y\,T_1+T_2 .$$
The left side is zero. Every term on the right is nonnegative. The term $y\|\vv b-\vv c\|$ is positive, because $\vv b-\vv c=-(\vv v_0\times\vv v_3)\ne\vv 0$. This is a contradiction.
\end{proof}

\section{Sign patterns}\label{sec:finite}
 
Let $V$ be a finite set. A \emph{sign pattern} on $V$ is a pair $(\sigma,\chi)$. Here $\sigma$ assigns a sign $\sigma_{ij}=\sigma_{ji}\in\PM$ to every pair of distinct elements of $V$, and $\chi$ assigns a sign $\chi(a,b,c)\in\PM$ to every triple of distinct elements, changing sign under the exchange of two arguments. For a four-element subset $Q=\{a<b<c<d\}$ put
$$\kappa^Q_a=\chi(b,c,d),\quad\kappa^Q_b=-\chi(a,c,d),\quad\kappa^Q_c=\chi(a,b,d),\quad\kappa^Q_d=-\chi(a,b,c).$$
These are the signs of $k^Q$ in \eqref{eq:kappa} when $\chi$ comes from a touching family.
 
\begin{definition}\label{def:adm}
Consider the following properties of a sign pattern $(\sigma,\chi)$ on $V$.
\begin{enumerate}[label=(P\arabic*)]
\item\label{P1} For every $a\in V$ and all distinct $v_1,v_2,v_3,v_4\in V\setminus\{a\}$, the three signs
$$\chi(a,v_1,v_2)\chi(a,v_3,v_4),\qquad-\chi(a,v_1,v_3)\chi(a,v_2,v_4),\qquad\chi(a,v_1,v_4)\chi(a,v_2,v_3)$$
are not all equal.
\item\label{P2} For every four-element subset $Q\subseteq V$, let $P$ be the set of pairs $\{i,j\}\subseteq Q$ with $\sigma_{ij}\kappa^Q_i\kappa^Q_j=+1$. Then $|P|=3$, or $|P|=2$ and the two pairs in $P$ share an element, or $|P|=4$ and the two pairs not in $P$ share an element.
\item\label{P3} For every five-element subset $S\subseteq V$, all four-element subsets $I,J\subseteq S$ and every $g\in\PM$, there is $j\in J$ such that at least two indices $i\in I\setminus\{j\}$ satisfy $\sigma_{ji}\kappa^J_j\kappa^I_i=g$.
\item\label{P4} For every injective map $p:\{0,\dots,4\}\to V$, every $f\in\PM^5$, and every $s\in\PM$, the pattern $(\sigma',\chi')$ on $\{0,\dots,4\}$ with $\sigma'_{ab}=f_af_b\sigma_{p(a)p(b)}$ and $\chi'(a,b,c)=f_af_bf_c\chi(p(a),p(b),p(c))$ does not satisfy both (i) and (ii) of Lemma~\ref{lem:five}.
\end{enumerate}
For a set $\cC$ of these properties, a pattern is \emph{$\cC$-admissible} if it has every property in $\cC$. We write $\cA_\cC(n)$ for the set of $\cC$-admissible patterns on $\{0,\dots,n-1\}$.
\end{definition}
 
\begin{theorem}\label{thm:general}
Let $C_1,\dots,C_n$ be a touching family with $n\ge7$, with chosen orientations of the axes and sign pattern $(\sigma,\chi)$ as in Section~\ref{sec:signs}. Then $(\sigma,\chi)$ satisfies \ref{P1} and \ref{P2}. If all radii are equal, it also satisfies \ref{P3} and \ref{P4}.
\end{theorem}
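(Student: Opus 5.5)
The plan is to check the four properties one at a time, each time matching it to an earlier result. The only point needing care is that the sign pattern depends on the chosen orientations, while some properties quantify over re-signings.

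\ref{P1} follows from Lemma~\ref{lem:gp} with $\vv a=\vv u_a$ and $\vv v_k=\vv u_{v_k}$. By Lemma~\ref{lem:coplanar}, applicable since any three members touch pairwise and their axes are pairwise non-parallel by Lemma~\ref{lem:parallel}, each $\Delta_{ij}=[\vv u_a\,\vv u_{v_i}\,\vv u_{v_j}]$ is nonzero with sign $\chi(a,v_i,v_j)$. So the three terms $\Delta_{12}\Delta_{34}$, $-\Delta_{13}\Delta_{24}$, $\Delta_{14}\Delta_{23}$ are nonzero reals summing to zero. Hence they cannot all have the same sign, which is exactly \ref{P1}.

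\ref{P2} is Lemma~\ref{lem:four}, once we note that for $Q=\{a<b<c<d\}$ we have $\sgn k^Q_i=\kappa^Q_i$ by \eqref{eq:kappa}. Then $\varepsilon_{ij}=\sigma_{ij}\kappa^Q_i\kappa^Q_j$, and the conclusion $|P|\in\{2,3,4\}$ with the stated sharing conditions is precisely \ref{P2}.

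For equal radii, \ref{P3} is Lemma~\ref{lem:two4}, again after replacing $\sgn k^I_i$ and $\sgn k^J_j$ by $\kappa^I_i$ and $\kappa^J_j$. The restriction to $I,J$ inside a five-set only weakens the statement, since Lemma~\ref{lem:two4} holds for arbitrary four-sets.

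\ref{P4} needs more care, and it is the one step that is not a direct citation. Suppose $p$, $f$ and $s$ give a pattern $(\sigma',\chi')$ satisfying (i) and (ii) of Lemma~\ref{lem:five}. Relabel the members $C_{p(0)},\dots,C_{p(4)}$ as $\ell_0,\dots,\ell_4$, and reverse the orientation of $\ell_a$ whenever $f_a=-1$. By the orientation-reversal rule in Section~\ref{sec:signs}, the signs of these five re-oriented lines are $\sigma_{p(a)p(b)}f_af_b=\sigma'_{ab}$ and $\chi(p(a),p(b),p(c))f_af_bf_c=\chi'(a,b,c)$. The relabelled, re-oriented five members still form part of a touching family with equal radii, all of whose hypotheses (pairwise non-parallel axes, $n\ge7$) are unaffected by relabelling or reorientation. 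Lemma~\ref{lem:five} therefore forbids (i) and (ii), a contradiction.

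The main thing to check is that Lemma~\ref{lem:five} really applies here. Its proof uses only the five lines, equal radii and Lemma~\ref{lem:master}(a) for dependencies supported on those five indices. These remain valid in $K$ after a permutation and sign flips of coordinates, since $c\in K$ transforms to $c'$ with $c'_a=f_ac_{p(a)}$.
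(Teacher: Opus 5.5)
Your proposal is correct and follows the paper's proof essentially step for step. You obtain \ref{P1} from Lemma~\ref{lem:gp}, with nonzero terms by Lemmas~\ref{lem:parallel} and~\ref{lem:coplanar}; \ref{P2} and \ref{P3} are Lemmas~\ref{lem:four} and~\ref{lem:two4} via $\kappa^Q_i=\sgn k^Q_i$; and \ref{P4} comes from relabelling and reorienting $C_{p(0)},\dots,C_{p(4)}$ and invoking Lemma~\ref{lem:five}. Your final check that Lemma~\ref{lem:five} survives reorientation is harmless but not needed, since the orientations in Section~\ref{sec:signs} were arbitrary and the reoriented family is again a touching family with equal radii.
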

 
\begin{proof}
Let $(\sigma,\chi)$ be the sign pattern of the family. For \ref{P1}, apply Lemma~\ref{lem:gp} with $\vv a=\vv u_a$ and $\vv v_i=\vv u_{v_i}$. The three products are the signs of the three terms of the identity. These terms are nonzero, because any three directions are independent. Three nonzero numbers with sum zero do not have the same sign. \ref{P2} is Lemma~\ref{lem:four}, since $\kappa^Q_i=\sgn k^Q_i$ by \eqref{eq:kappa}. Let the radii be equal. \ref{P3} is Lemma~\ref{lem:two4}. For \ref{P4}, let $p$, $f$, and $s$ be given. The five cylinders $C_{p(0)},\dots,C_{p(4)}$, with the orientation of $\ell_{p(a)}$ reversed when $f_a=-1$, have the sign pattern $(\sigma',\chi')$. By Lemma~\ref{lem:five} it does not satisfy both (i) and (ii).
\end{proof}

\begin{lemma}\label{lem:symmetry}
Let $\cC$ be any set of the properties in Definition~\ref{def:adm}. If $(\sigma,\chi)$ is $\cC$-admissible on $V$, then so are the following patterns.
\begin{enumerate}[label=(\alph*)]
\item \emph{Relabelling.} $(\sigma\circ\pi,\chi\circ\pi)$ for a bijection $\pi$ of $V$, where $(\sigma\circ\pi)_{ij}=\sigma_{\pi(i)\pi(j)}$ and $(\chi\circ\pi)(a,b,c)=\chi(\pi(a),\pi(b),\pi(c))$.
\item \emph{Reorientation.} $(\sigma^f,\chi^f)$ for $f\in\PM^V$, where $\sigma^f_{ij}=f_if_j\sigma_{ij}$ and $\chi^f(a,b,c)=f_af_bf_c\chi(a,b,c)$.
\item \emph{Reflection.} $(-\sigma,\chi)$.
\item \emph{Restriction.} The restriction of $(\sigma,\chi)$ to any subset of $V$.
\end{enumerate}
\end{lemma}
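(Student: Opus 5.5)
Since $\cC$ is an arbitrary subset of \ref{P1}--\ref{P4}, I will check each operation against each property separately: every property is invariant under every operation. The properties are all local (they involve at most five elements and only the signs on those elements), so each check reduces to seeing how the quantities in the definition transform.

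\emph{Restriction} (d) is immediate. Each property is a universal statement over elements, four-element subsets, five-element subsets or injective maps into $V$, and these objects for a subset $W\subseteq V$ are among those for $V$. The sign data appearing in the conditions are the same.

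\emph{Reorientation} (b). For \ref{P1}, each of the three products picks up the factor $f_a^2f_{v_1}f_{v_2}f_{v_3}f_{v_4}$, which is the same common factor $f_{v_1}f_{v_2}f_{v_3}f_{v_4}$ for all three. Multiplying three signs by a common sign does not change whether they are all equal. For \ref{P2} and \ref{P3} the key computation is
\[
\kappa'^Q_i=\Big(\prod_{q\in Q}f_q\Big)f_i\,\kappa^Q_i ,
\]
since $\kappa^Q_i$ is $\pm\chi$ of the other three elements of $Q$. Then $\sigma^f_{ij}\kappa'^Q_i\kappa'^Q_j=\sigma_{ij}\kappa^Q_i\kappa^Q_j$, so the set $P$ is unchanged. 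In \ref{P3} the product becomes $\sigma_{ji}\kappa^J_j\kappa^I_i\cdot F_IF_J$, where $F_I=\prod_{q\in I}f_q$. This is a global sign independent of $i,j$, and it is absorbed by replacing $g$ with $gF_IF_J$, which is allowed since the condition is required for both values of $g$. For \ref{P4}, reorienting and then applying $(p,f')$ equals applying $(p,f'')$ with $f''_a=f'_af_{p(a)}$. Since \ref{P4} quantifies over all $f\in\PM^5$, the property is preserved.

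\emph{Relabelling} (a). Properties \ref{P1}, \ref{P3} and \ref{P4} quantify over all elements, subsets and injections, so they transfer directly via $\pi$. The one point needing care, and what I expect to be the main obstacle, is that $\kappa^Q$ is defined using the order $a<b<c<d$, which $\pi$ need not preserve. I would show that reordering $Q$ by a permutation $\tau$ of the four positions changes the vector $\kappa^Q$ by the global factor $\sgn\tau$. This follows either from the alternating property of $\chi$, checking a transposition of adjacent elements, or by noting that the formula mirrors Lemma~\ref{lem:cramer}, whose dependence vector scales by $\sgn\tau$. A global sign on $\kappa^Q$ leaves the products $\kappa^Q_i\kappa^Q_j$ in \ref{P2} unchanged, and in \ref{P3} it is absorbed into $g$ as before.

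\emph{Reflection} (c). \ref{P1} does not involve $\sigma$. In \ref{P2}, negating $\sigma$ swaps $P$ with its complement among the six pairs. The allowed cases are $|P|=3$, $|P|=2$ with sharing pairs, and $|P|=4$ with sharing complement, and this set is symmetric under complementation. In \ref{P3} negation is absorbed by $g\mapsto -g$. In \ref{P4} it is absorbed by $s\mapsto -s$, since condition (ii) of Lemma~\ref{lem:five} negates exactly with $s$ and condition (i) does not involve $\sigma$.
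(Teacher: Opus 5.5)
Your proof is correct and takes essentially the same approach as the paper's: $\kappa^Q_i\mapsto f_Qf_i\kappa^Q_i$ under reorientation, a global sign on $\kappa^Q$ under reordering of $Q$ (cancelling in \ref{P2}, absorbed by $g$ in \ref{P3}), complementation of $P$ under reflection, and absorption of reorientation and reflection into the quantifiers over $f$ and $s$ in \ref{P4}. The only differences are that you organize the argument operation by operation, while the paper treats \ref{P4} first, and that your formula $f''_a=f'_af_{p(a)}$ spells out the paper's remark that the derived five-point patterns are the same.
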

 
\begin{proof}
Every property is a statement about subsets of at most five elements. This gives (d).

Property \ref{P4} says that no pattern obtained from $(\sigma,\chi)$ by relabelling, reorientation, and restriction to five elements satisfies (i) and (ii) of Lemma~\ref{lem:five}, for either value of $s$. The patterns obtained in this way from $(\sigma\circ\pi,\chi\circ\pi)$ or from $(\sigma^f,\chi^f)$ are the same as those obtained from $(\sigma,\chi)$. Condition (ii) for $-\sigma$ and $s$ is condition (ii) for $\sigma$ and $-s$. So \ref{P4} is preserved by (a), (b), and (c).

Let $f\in\PM^V$ and $f_Q=\prod_{q\in Q}f_q$. By the definition of $\kappa^Q$, the pattern $(\sigma^f,\chi^f)$ has $\kappa^Q_i$ replaced by $f_Qf_i\kappa^Q_i$. So under (b) the three products in \ref{P1} are all multiplied by $f_{v_1}f_{v_2}f_{v_3}f_{v_4}$, the product $\sigma_{ij}\kappa^Q_i\kappa^Q_j$ in \ref{P2} is unchanged, and $\sigma_{ji}\kappa^J_j\kappa^I_i$ in \ref{P3} is multiplied by $f_If_J$, which does not depend on $i$ and $j$ and is absorbed by $g$.

Relabelling permutes the choices of $a,v_1,\dots,v_4$ in \ref{P1}, of $Q$ in \ref{P2}, and of $S,I,J$ in \ref{P3}. Since $\chi$ is alternating, listing $Q$ in another order changes $\kappa^Q$ by a global sign. This sign cancels in $\kappa^Q_i\kappa^Q_j$ and is absorbed by $g$ in \ref{P3}. So (a) preserves \ref{P1}, \ref{P2}, and \ref{P3}.

Reflection does not affect \ref{P1}. In \ref{P2} it exchanges $P$ with its complement, and the property is symmetric under this exchange. In \ref{P3} it changes every product by $-1$, absorbed by $g$.
\end{proof}
 
By Lemma~\ref{lem:symmetry}, whether some $\chi$ makes $(\sigma,\chi)$ $\cC$-admissible does not change when $\sigma$ is relabelled or reoriented.

\begin{theorem}\label{thm:finite}
$\cA_{\{P1,P2\}}(11)=\varnothing$ and $\cA_{\{P1,P2,P3,P4\}}(8)=\varnothing$.
\end{theorem}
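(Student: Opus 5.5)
The proof will be a computer-assisted exhaustive search, organized so that the symmetries of Lemma~\ref{lem:symmetry} cut the number of cases to a few thousand. The engine is a depth-first search over $\chi$ for a fixed $\sigma$. Order the triples of $\{0,\dots,n-1\}$ so that all triples inside $\{0,\dots,m-1\}$ precede those containing $m$. Assign $\chi$ on them one at a time, trying $+1$ and $-1$. As soon as the last triple needed by some instance of a property is fixed, test that instance: for \ref{P1} a $5$-set with a distinguished apex, for \ref{P2} a $4$-set $Q$, for \ref{P3} a $5$-set $S$ with $I,J$, and for \ref{P4} a $5$-set with a labelling $p$, a sign vector $f$ and a sign $s$. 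On failure the subtree is pruned. Each instance depends on at most ten signs of $\chi$, so pruning happens at small depth. The output for a given $\sigma$ is either ``no extension'' or the list of admissible $\chi$.

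\textbf{First claim.} Since the properties \ref{P1}--\ref{P4} only concern subsets of at most five elements, a pattern on $\{0,\dots,10\}$ restricts to admissible patterns on every $10$-subset (Lemma~\ref{lem:symmetry}(d)). The plan is as follows.
\begin{enumerate}[label=(\arabic*)]
\item Normalize $\sigma$ by reorientation (Lemma~\ref{lem:symmetry}(b)) so that $\sigma_{0j}=+1$ for all $j$. The remaining freedom is a switching class of a graph on the other vertices, together with relabelling.
\item Enumerate the switching classes of Seidel matrices, or equivalently graphs up to switching, on $10$ vertices. There are a few thousand of them, available from known enumerations or a canonical-form computation with nauty.
\item Further identify classes under reflection (Lemma~\ref{lem:symmetry}(c)), which replaces $\sigma$ by $-\sigma$.
\item For each representative, run the search with $\{P1,P2\}$ and record all admissible $\chi$ on $10$ points.
\item For the survivors, extend to an eleventh point. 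Choose the new row of $\sigma$ in all $2^{10}$ ways, and search over the $45$ new triples with the same pruning.
\end{enumerate}
If no surviving $\sigma$ on $10$ points exists, step (5) is vacuous. Alternatively, $11$ points can be handled directly by building on $10$-point survivors, which is what ``passage from ten points to eleven'' suggests.

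\textbf{Second claim.} Here $n=8$ and all four properties are used. Switching classes on $8$ vertices are few, roughly a couple hundred. For each, the search over $\chi$ on $56$ triples with all four properties should terminate quickly with no leaf.

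\textbf{Main obstacle.} I expect the main difficulty to be the correctness and verifiability of the code, not its running time. The key points are:
\begin{itemize}
\item the definition of $\kappa^Q$ with the correct ordering signs;
\item alternation of $\chi$, so that each unordered triple is stored once with orientation signs;
\item \ref{P4} ranging over all injections and reorientations;
\item the completeness of the list of switching-class representatives.
\end{itemize}

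To guard against errors, I would use two independent implementations and a brute-force check on small $n$. I would also check that the seven-cylinder configuration of \cite{BLR15} yields a $\{P1,\dots,P4\}$-admissible pattern on $7$ points, and that random configurations of lines satisfying \eqref{eq:touch} yield admissible patterns. Finally, I would log node counts to certify that the search terminates.
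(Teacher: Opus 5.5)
Your plan is correct and uses the same engine as the paper: a depth-first search over $\chi$ for a fixed $\sigma$, testing each instance of \ref{P1}--\ref{P4} as soon as its last triple is assigned, with $\sigma$ considered only up to relabelling and reorientation.

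You differ in how the signings $\sigma$ are enumerated. The paper never lists switching classes globally. It starts from the single signing on two points and repeatedly forms all one-point extensions of the signings kept so far. It runs the search on each and keeps one survivor per class. Completeness of the kept lists is proved by induction on the number of points, from Lemma~\ref{lem:symmetry}(a),(b),(d). You instead import a complete list of switching classes on $10$ (resp.\ $8$) vertices from nauty or the literature, and use the incremental idea only for the step from $10$ to $11$ points. Your step (5) is justified exactly like the paper's induction step, so the argument is sound.

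Your route has two costs. First, it rests part of the proof on an external enumeration, as you note. Second, it is more work. There are $33120$ switching classes on ten vertices, not a few thousand; $2038$ is the count for nine. So even after identifying $\sigma$ with $-\sigma$ you search more than $16\,000$ trees. The paper reaches ten points only by extending the $25$ surviving nine-point classes, searches a few thousand trees in total, and needs nothing outside its own program. What you gain is a cross-check against an independently produced list of two-graphs. Your extra quotient by reflection is valid by Lemma~\ref{lem:symmetry}(c), but the paper does not need it.
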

 
Theorem~\ref{thm:finite} is proved by computer in Section~\ref{sec:verifier}.
 
\begin{proof}[Proof of Theorems~\ref{thm:seven} and~\ref{thm:ten}]
Suppose $C_1,\dots,C_{11}$ touch pairwise. By Theorem~\ref{thm:general} their sign pattern lies in $\cA_{\{P1,P2\}}(11)$, which is empty by Theorem~\ref{thm:finite}. Seven congruent cylinders that touch pairwise exist \cite{BLR15}. Suppose eight congruent cylinders touch pairwise. By Theorem~\ref{thm:general} their sign pattern lies in $\cA_{\{P1,P2,P3,P4\}}(8)$, which is empty.
\end{proof}
 
\section{The verifier}\label{sec:verifier}
 
Theorem~\ref{thm:finite} is proved by the program \cite{Verifier}. It is about five hundred lines of C++ and depends on nothing but the standard library. It proves two statements, which it calls Part~1 and Part~2. Part~1 is $\cA_{\{P1,P2\}}(11)=\varnothing$, a statement about $2^{220}$ sign patterns. Part~2 is $\cA_{\{P1,P2,P3,P4\}}(8)=\varnothing$, a statement about $2^{84}$ sign patterns. The program does not examine the patterns one by one. It rests on three ideas.

\emph{Pairs first, then triples.} For a fixed $\sigma$ on $m$ points, the unknowns are the $\binom m3$ signs $\chi(a,b,c)$ with $a<b<c$. The program assigns them one at a time by depth-first search. Each instance of a property in Definition~\ref{def:adm} involves at most ten unknowns: six for \ref{P1}, four for \ref{P2}, and the ten triples inside a five-element set for \ref{P3} and \ref{P4}. An instance is checked as soon as the last of its unknowns is assigned. If it fails, the search backtracks. The search ends either with a $\chi$ such that $(\sigma,\chi)$ is $\cC$-admissible, or with the conclusion that no such $\chi$ exists. In the first case we say that $\sigma$ \emph{survives}.

\emph{One point at a time.} If $\sigma$ on $m+1$ points survives, then so does its restriction to the first $m$ points, by Lemma~\ref{lem:symmetry}(d). So the program proceeds by the number of points. It starts with the one signing on two points. From the surviving signings on $m$ points it forms all signings on $m+1$ points whose restriction to the first $m$ points is one of them, and runs the search for each. Every surviving signing on $n$ points is reached in this way. Hence $\cA_\cC(n)=\varnothing$ if and only if no signing on $n$ points survives.

\emph{Symmetries.} If $\sigma$ survives, then so does every signing obtained from it by a relabelling and a reorientation, by Lemma~\ref{lem:symmetry}(a),(b). The program recognizes when two signings differ in this way. Among the signings it forms on $m$ points, it runs the search for one from each class and keeps only that one. The lists stay complete in the following sense: every surviving signing on $m$ points differs from a kept one by a relabelling and a reorientation. This is proved by induction on $m$. The restriction of a surviving $\sigma'$ to the first $m-1$ points survives, so it differs from a kept signing by a relabelling and a reorientation. The same relabelling, extended by fixing the new point, and a reorientation carry $\sigma'$ to a signing formed by the program on $m$ points. It survives, so it or a signing in its class was kept.
 
In Part~1, the numbers of signings kept on $3,4,\dots,11$ points are $2$, $3$, $5$, $10$, $20$, $30$, $25$, $4$, and $0$. In Part~2, the numbers kept on $3,4,\dots,8$ points are $2$, $3$, $5$, $10$, $12$, and $0$.
The run takes about a minute on a laptop. This proves Theorem~\ref{thm:finite}.

\section*{On the use of AI}
The ideas for proofs were developed with the help of Anthropic's Claude models. The programs were written with the help of the models and reviewed and tested by the authors. The text was proofread and edited with the same models. The authors take full responsibility for the correctness of the results.

\end{document}